\chardef\coloryes=0 
\chardef\isitdraft=0 
   \def\eqref#1{({\ref{#1}})}                
\definecolor{labelkey}{gray}{.3}
\definecolor{refkey}{rgb}{.3,0.3,0.3}
  \def\startnewsection#1#2{\section{#1}\label{#2}\setcounter{equation}{0}}   
  \def\nnewpage{} 
\begin{document}
\def\ques{{\cor \underline{??????}\cob}}
\def\nto#1{{\coC \footnote{\em \coC #1}}}
\def\fractext#1#2{{#1}/{#2}}
\def\fracsm#1#2{{\textstyle{\frac{#1}{#2}}}}   
\def\nnonumber{}


\def\cor{{}}
\def\cog{{}}
\def\cob{{}}
\def\coe{{}}
\def\coA{{}}
\def\coB{{}}
\def\coC{{}}
\def\coD{{}}
\def\coE{{}}
\def\coF{{}}

\ifnum\coloryes=1

  \definecolor{coloraaaa}{rgb}{0.1,0.2,0.8}
  \definecolor{colorbbbb}{rgb}{0.1,0.7,0.1}
  \definecolor{colorcccc}{rgb}{0.8,0.3,0.9}
  \definecolor{colordddd}{rgb}{0.0,.5,0.0}
  \definecolor{coloreeee}{rgb}{0.8,0.3,0.9}
  \definecolor{colorffff}{rgb}{0.8,0.3,0.9}
  \definecolor{colorgggg}{rgb}{0.5,0.0,0.4}

 \def\cog{\color{colordddd}}
 \def\cob{\color{black}}
 \def\cor{\color{red}}
 \def\coe{\color{colorgggg}}

 \def\coA{\color{coloraaaa}}
 \def\coB{\color{colorbbbb}}
 \def\coC{\color{colorcccc}}
 \def\coD{\color{colordddd}}
 \def\coE{\color{coloreeee}}
 \def\coF{\color{colorffff}}
 \def\coG{\color{colorgggg}}

\fi
\ifnum\isitdraft=1
   \chardef\coloryes=1 
   \baselineskip=17pt
   \input macros.tex
   \def\blackdot{{\color{red}{\hskip-.0truecm\rule[-1mm]{4mm}{4mm}\hskip.2truecm}}\hskip-.3truecm}
   \def\bdot{{\coC {\hskip-.0truecm\rule[-1mm]{4mm}{4mm}\hskip.2truecm}}\hskip-.3truecm}
   \def\purpledot{{\coA{\rule[0mm]{4mm}{4mm}}\cob}}
   \def\pdot{\purpledot}
\else
   \baselineskip=15pt
   \def\blackdot{{\rule[-3mm]{8mm}{8mm}}}
   \def\purpledot{{\rule[-3mm]{8mm}{8mm}}}
   \def\pdot{}
\fi

\def\tdot{\fbox{\fbox{\bf\tiny I'm here; \today \ \currenttime}}}
\def\nts#1{{\hbox{\bf ~#1~}}} 
\def\nts#1{{\cor\hbox{\bf ~#1~}}} 
\def\ntsf#1{\footnote{\hbox{\bf ~#1~}}} 
\def\ntsf#1{\footnote{\cor\hbox{\bf ~#1~}}} 
\def\bigline#1{~\\\hskip2truecm~~~~{#1}{#1}{#1}{#1}{#1}{#1}{#1}{#1}{#1}{#1}{#1}{#1}{#1}{#1}{#1}{#1}{#1}{#1}{#1}{#1}{#1}\\}
\def\biglineb{\bigline{$\downarrow\,$ $\downarrow\,$}}
\def\biglinem{\bigline{---}}
\def\biglinee{\bigline{$\uparrow\,$ $\uparrow\,$}}

\def\tilde{\widetilde}

\newtheorem{Theorem}{Theorem}[section]
\newtheorem{Corollary}[Theorem]{Corollary}
\newtheorem{Proposition}[Theorem]{Proposition}
\newtheorem{Lemma}[Theorem]{Lemma}
\newtheorem{Remark}[Theorem]{Remark}
\newtheorem{assumption}[Theorem]{Assumption}
\newtheorem{definition}{Definition}[section]
\def\theequation{\thesection.\arabic{equation}}
\def\endproof{\hfill$\Box$\\}
\def\square{\hfill$\Box$\\}
\def\comma{ {\rm ,\qquad{}} }            
\def\commaone{ {\rm ,\qquad{}} }         
\def\dist{\mathop{\rm dist}\nolimits}    
\def\sgn{\mathop{\rm sgn\,}\nolimits}    
\def\Tr{\mathop{\rm Tr}\nolimits}    
\def\div{\mathop{\rm div}\nolimits}    
\def\supp{\mathop{\rm supp}\nolimits}    
\def\divtwo{\mathop{{\rm div}_2\,}\nolimits}    
\def\re{\mathop{\rm {\mathbb R}e}\nolimits}    

\def\dbR{\mathbb R}
\def\dbL{\mathbb L}
\def\dbN{\mathbb N}
\def\dbP{\mathbb P}
\def\dbE{\mathbb E}
\def\dbF{\mathbb F}
\def\dbG{\mathbb G}
\def\dbQ{\mathbb Q}
\def\CC{{\tilde C}}
\def\cZ{{\cal Z}}
\def\cD{{\cal D}}

\def\indeq{\qquad{}}                     
\def\period{.}                           
\def\semicolon{\,;}                      

\title{Existence of invariant measures for the stochastic damped  KdV equation}
\author{Ibrahim Ekren, Igor Kukavica, and Mohammed Ziane}
\maketitle

\date{}

\begin{center}
\end{center}

\medskip

\indent Departement fur Mathematik, ETH Zurich, Ramistrasse 101, CH-8092, Zurich\\
\indent email: ibrahim.ekren@math.ethz.ch\\
\indent Department of Mathematics, University of Southern California, Los Angeles, CA 90089\\
\indent e-mails: kukavica\char'100usc.edu, ziane\char'100usc.edu

\begin{abstract}
We address the long time behavior of solutions of the stochastic
Korteweg-de Vries equation
$    du + (\partial^3_x u +u\partial_x u +\lambda u)dt  = f dt+\Phi dW_t$
on ${\mathbb R}$ where $f$ is a deterministic force. 
We prove that the Feller property holds and establish the
existence
of an invariant measure. The tightness is established with the help of the
asymptotic compactness, which is carried out  using the
Aldous criterion.
\end{abstract}

\noindent\thanks{\em Mathematics Subject Classification\/}:

\noindent\thanks{\em Keywords:\/}
Invariant measures, Stochastic KdV equation, White noise, Long time behavior, Asymptotic compactness, tightness, Feller property, Aldous criterion.

\startnewsection{Introduction}{sec1} 
In this paper, we investigate the long time behavior
of solutions of the stochastic damped KdV equation
  \begin{equation}
    du + (\partial^3_x u +u\partial_x u +\lambda u)dt
    =  f  dt +
    \Phi dW_t,
   \label{kdv-intro-A}
  \end{equation}
with a nonzero deterministic force, by establishing
the existence of an invariant measure. 

Invariant measures play a crucial role in understanding the long time
dynamics of solutions of stochastic partial differential equations \cite{CGV,DO,GKVZ,F1,F2}. In
particular, they were constructed for the stochastic Navier-Stokes system
\cite{F1}, the stochastic conservation laws \cite{DV}, the 
stochastic primitive equations \cite{GKVZ}, and for many
other equations and systems in mathematical physics.
However, as far as we know, the existence of an invariant measure for the
stochastic damped KdV equation is open, the difficulties
being the non-compactness of the domain and the 
asymptotic compactness of the semigroup.


The existence and uniqueness of solutions
for the stochastic KdV equation has been established by 
de~Bouard and Debussche in \cite{DD1} (cf.~also \cite{Da,DD2,DD3,DD4,DP,O}).
However,
the existence of invariant measure, which by the Krylov-Bogoliubov
procedure requires
the Feller property and the tightness property of the time averages,
has not been established except when including
additional dissipative terms and in bounded domains \cite{Ku1,Ku2}.

There  are two main difficulties in carrying out the Krylov-Bogoliubov procedure for of the stochastic damped KdV equation. 
The first difficulty is related to establishing  the Feller
property, whose proof usually follows
from a~priori estimates on the solutions and the dominated convergence theorem. However, in the case of the KdV equation,
there are no a~priori estimates up to deterministic
times. In order to circumvent this difficulty, we use the results in \cite{DD1} and  establish a~priori
estimates up to some stopping times, which are then used to show the Feller property of
the transition semigroup.

The second and the main difficulty 
in establishing the existence of an invariant measure for \eqref{kdv-intro-A}
is the tightness of  the time averages. In fact all known approaches 
fail due to the lack of compactness and dissipation.
Thus, in order to obtain the tightness of averages, we are led to give a unconventional  proof. Indeed, we study the
equation on the whole domain which is unbounded  with non-compact Sobolev embeddings. To show the tightness, we first
use the existence
results in \cite{DD1} in order to establish uniform estimates on the solutions of the 
equation. These bounds give us tightness of measures on the space
$L^2_{\rm loc}(\dbR)$ of locally square integrable functions.  To pass
from tightness in $L^2_{\rm loc}(\dbR)$ to tightness in $L^2(\dbR)$, one
intuitively needs to show that there is no mass escaping to infinity,
which in this stochastic framework means the convergence of the
expectation of the square of the $L^2(\dbR)$ norm to the 
expectation of the square of the $L^2(\dbR)$ norm of the limiting
measure. We then use a result in \cite{Pr} on the convergence in
measure in Hilbert spaces, to obtain the tightness in $L^2(\dbR)$ and
$H^1(\dbR)$.



In the deterministic case, there is a vast literature on the 
well-posedness of solutions of the KdV equation. Starting with
the seminal  of Temam \cite{T} , who 
established the global existence of weak solutions in
$H^1$, then the
existence and uniqueness in  Sobolev spaces
was established by Kato (cf.~\cite{K}). The well-posedness theory was further
studied by Bona-Smith \cite{BS1,BS2}, 
Saut-Temam \cite{ST}, Bourgain \cite{Bou}, Kenig~et~al \cite{KPV1,KPV2}, 
Colliander et al \cite{CKSTT},
and by many other authors.
The long time behavior of the KdV
equations
was initially studied by Ghidaglia in \cite{G},
who also established the existence
and $H^2$ regularity of global attractors thus showing compactness 
at the infinite time. Further works by Moise, Rosa, Goubet, and Laurencot
lowered the regularity of the force and showing infinite
time compactness in periodic setting as well \cite{Go,GM,GR,L,MR}.

The existence and uniqueness of strong solutions of the stochastic KdV
equation on the domain $\dbR$ with additive noise is established in
\cite{DD1}. The authors provide estimates on the solutions of the
linear KdV equation and use these estimates to show local in time
existence of solutions for the nonlinear equation. Then using the
estimates in $H^1(\dbR)$, the authors show global existence of strong
solutions. We also mention that the problem was also studied in
\cite{P} on weighted Sobolev spaces.

The paper is organized as follows. In Section~\ref{sec2}, we introduce
the main notation, while Section~\ref{sec4} contains the main results.
Section~\ref{sec5} contains the main steps of the proof, including the
Feller
property and the asymptotic compactness. The Appendix
contains the proof of the convergence of norms,
which is the crucial step in showing compactness in
$L^{2}({\mathbb R})$ and 
$H^{1}({\mathbb R})$.

\startnewsection{The notation and the main result}{sec2}
\subsection{The stochastic Korteweg-de Vries equation}
Fix a stochastic basis $(\Omega,{\mathbb G},\{{\mathbb G}_t\}_{t\geq 0},\dbP)$.
With $(e_i)_{i\in \dbN}$, an orthonormal basis of $L^2(\dbR)$,
consisting of smooth compactly supported elements  
and $(\beta_i)_{i\in\dbN}$, a sequence of mutually independent 
one dimensional Brownian motions, denote by
  \begin{equation} W(t)=\sum_{i\in \dbN}\beta_i (t) e_i
  \end{equation}
a cylindrical Wiener process on $L^2(\dbR)$.
Consider the stochastic weakly damped Korteweg-de Vries equation
  \begin{equation}
    du + (\partial^3_x u +u\partial_x u +\lambda u)dt
    =
    f dt+
    \Phi dW(t),
   \label{kdv-intro}
  \end{equation}
where $\lambda>0$,
with the initial condition 
  \begin{equation}\label{kdv-initial}
 u(x,0)=u_0 (x),\quad x\in \dbR
   \period
  \end{equation}

\subsection{Notation}
For functions $u,v\in L^2(\dbR)$, 
denote by $\Vert  u\Vert_{L^2}$ the $L^2(\dbR)$ norm of $u$ and by $(u,v)$,
the
$L^2$-inner product of $u$ and $v$.
For a Banach space $B$ and with $T>0$ and $p>0$, denote by $L^p([0,T];B)$ the space of functions from $[0,T]$ into $B$ with integrable $p$-th power over $[0,T]$ and by $C([0,T];B)$ the set of continuous functions from $[0,T]$ into $B$. 

The Fourier transform (resp.\ the inverse Fourier transform) of $u$ is denoted by $ {\cal F} (u)$ (resp.\ $ {\cal F}^{-1} (u)$).  The Sobolev space $H^\sigma (\dbR)$ is the set of real functions $u$ verifying 
  \begin{equation}\Vert u\Vert^2_{H^\sigma}=\int_\dbR (1+|\xi|^2)^{\fractext{\sigma}{2}}|{\cal F} (u)(\xi)|^2 \,d\xi<\infty
   \end{equation}
and ${\cal B} (H^1(\dbR) )$ is the set of Borel measurable subsets of $H^1(\dbR)$.

For a Hilbert space $H$, we write ${\rm HS}(L^2,H)$ for the space of linear operators $\Phi$ from $L^2(\dbR)$ into $H$ with finite Hilbert-Schmidt norm 
  \begin{equation}
     \Vert\Phi\Vert_{{\rm HS}(L^2,H)}
        =
         \left(
          \sum_{i=1}^\infty \Vert\Phi e_i\Vert_H^2
         \right)^{1/2}
   \period
  \end{equation}
Similarly to functional spaces, for $p>0$ we denote by $\dbL^p (\Omega,B)$ the space of random variables with values in $B$ and finite $p$-th moment.

\subsection{Well-posedness of the equation}

The equation \eqref{kdv-intro} was studied in \cite{DD1} in the case
$\lambda=0$. 
The arguments carry over to the case $\lambda>0$ with only slight modifications.

For all $\lambda \in \dbR$, we denote by $\{U_\lambda(t)\}_{t\geq 0}$ the solution operator of the partial differential equation
  \begin{equation}
 \label{linear-pde}
 du(t)+(\partial_x^3 u +\lambda u)dt=0
   \period
  \end{equation}
Note that $U_\lambda(t)=e^{-\lambda t}U_0 (t)$. We then write the equation \eqref{kdv-intro} in the mild form
  \begin{equation}\label{kdv-mild}
u(t)=U_\lambda(t) u_0-\int_0^t U_\lambda(t-s) u(s)\partial_x u(s)\,ds+\int_0^t U_\lambda (t-s) f \,ds+\int_0^t U_\lambda (t-s) \Phi \,dW(s)
   \period
\end{equation} 
Throughout the paper we assume that
  \begin{equation}
   f\in H^3(\dbR)
   \label{EQ00a}
  \end{equation}
and
  \begin{equation}
   v\to (v,f)\mbox{ is continuous in }L^2_{\rm loc}(\dbR)
   \period
   \label{EQ00b}
  \end{equation}
We also require
  \begin{equation}
   \Phi\in {\rm HS}(L^2(\dbR);H^{3+}(\dbR))
   \period
   \label{EQ00c}
  \end{equation}
By ${\rm HS}(L^2(\dbR);H^{3+}(\dbR))$ we mean ${\rm HS}(L^2(\dbR);H^{\sigma}(\dbR))$ for some $\sigma>3$.  
Recall that $u$ is a mild solution of \eqref{kdv-intro} if $u$ verifies \eqref{kdv-mild} for all $t\geq 0$, $\dbP$ a.s. 
The following statement
addresses existence and uniqueness of solutions.




\coe
\begin{Theorem}\label{existence-solution}
 Assume that $u_0\in L^2 (\Omega; H^1(\dbR)) \cap L^4 (\Omega; L^2(\dbR)) $ is ${\cal G}_0$-measurable. Then there exists a unique mild solution of \eqref{kdv-intro} with paths almost surely in $C([0,\infty);H^1(\dbR))$ and with $u\in L^2 (\Omega; L^\infty(0,T ;H^1(\dbR)))$ for all $T>0$. 
  Additionally, if $u_0\in L^2 (\Omega; H^3(\dbR))$ then $u\in L^2 (\Omega; L^\infty(0,T ;H^3(\dbR)))$ for all $T>0$. 
\end{Theorem}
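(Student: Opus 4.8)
The plan is to adapt the arguments of de~Bouard--Debussche \cite{DD1}, which treat the case $\lambda=0$ and $f=0$, to the present setting with $\lambda>0$ and a deterministic forcing term satisfying \eqref{EQ00a}--\eqref{EQ00b}. Since $U_\lambda(t)=e^{-\lambda t}U_0(t)$, the extra linear term only improves the estimates: the damping factor $e^{-\lambda t}$ is bounded by $1$ on $[0,\infty)$, so every Strichartz-type and smoothing estimate for the Airy group $U_0$ used in \cite{DD1} transfers verbatim to $U_\lambda$. The forcing contribution $\int_0^t U_\lambda(t-s)f\,ds$ is handled by the same deterministic linear estimates, using $f\in H^3(\dbR)\subset H^1(\dbR)$ to keep it in the relevant function space; it is lower order compared to the stochastic convolution and the nonlinearity.

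First I would set up the local existence by a fixed-point argument in a Bourgain-type space $X^{s,b}$ intersected with $C([0,T];H^1)$, exactly as in \cite{DD1}: the map
  \begin{equation*}
   \Gamma(u)(t)=U_\lambda(t)u_0-\int_0^t U_\lambda(t-s)u(s)\partial_x u(s)\,ds+\int_0^t U_\lambda(t-s)f\,ds+\int_0^t U_\lambda(t-s)\Phi\,dW(s)
  \end{equation*}
is shown to be a contraction on a small ball for $T=T(\|u_0\|_{H^1},\omega)$ small, where the key bilinear estimate controlling $u\partial_x u$ in $X^{s,b-1}$ is the one of Kenig--Ponce--Vega, and the stochastic convolution is estimated in $X^{s,b}$ using \eqref{EQ00c}, i.e.\ $\Phi\in{\rm HS}(L^2;H^{3+})$ with the Burkholder--Davis--Gundy inequality. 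This yields a unique local mild solution with paths in $C([0,T];H^1(\dbR))$. Second, to globalize in $H^1$, I would apply the It\^o formula to the $H^1$-energy functional associated with the KdV equation (the combination of $\|\partial_x u\|_{L^2}^2$ and the cubic term), as in \cite{DD1,G}: the damping $-\lambda u$ produces a favorable $-2\lambda\|\partial_x u\|_{L^2}^2$ type term, the forcing $f\in H^3$ contributes a term controlled by $\|f\|_{H^1}\|u\|_{H^1}$, and the It\^o correction is bounded by $\|\Phi\|_{{\rm HS}(L^2;H^1)}^2$. Together with the conservation/control of the $L^2$ norm (for which one uses $u_0\in L^4(\Omega;L^2)$ and the It\^o formula for $\|u\|_{L^2}^2$, again gaining from $\lambda>0$), a Gronwall argument gives the a~priori bound $u\in L^2(\Omega;L^\infty(0,T;H^1(\dbR)))$ and hence no finite-time blow-up, so the solution extends to $[0,\infty)$.

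Third, for the $H^3$ statement, assuming $u_0\in L^2(\Omega;H^3)$, I would run the analogous energy estimate at the level of the next KdV invariant (or simply differentiate the equation and estimate $\partial_x^k u$ for $k\le 3$ in $X^{s,b}$), using that $f\in H^3$ and $\Phi\in{\rm HS}(L^2;H^{3+})$ exactly match the regularity needed so that the forcing and noise terms stay in $H^3$; the nonlinear term is handled by the $H^1$ bound already obtained (which controls $\|u\|_{L^\infty_x}$) together with energy estimates, yielding $u\in L^2(\Omega;L^\infty(0,T;H^3(\dbR)))$.

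The main obstacle is the global $H^1$ a~priori estimate in the stochastic setting: unlike the deterministic case, the energy is not conserved, and one must carefully apply the It\^o formula to the (non-quadratic, cubic) Hamiltonian functional on the infinite domain $\dbR$, justify it by a Galerkin or regularization approximation, and close the estimate despite the cubic term $\int u^3$ having no definite sign — here one uses the Gagliardo--Nirenberg inequality to absorb $\int u^3$ into $\epsilon\|\partial_x u\|_{L^2}^2 + C\|u\|_{L^2}^6$ and then controls $\|u\|_{L^2}$ separately. The fact that this was already carried out in \cite{DD1} for $\lambda=0$, $f=0$ means the remaining work is to verify that adding $-\lambda u$ (which only helps) and $f\,dt$ with $f\in H^3$ (a lower-order, deterministic perturbation with the continuity property \eqref{EQ00b}) does not disturb any step, which is the ``only slight modifications'' remark preceding the theorem.
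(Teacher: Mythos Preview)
Your proposal is essentially correct and matches the paper's treatment: the paper does not give a proof but simply refers to \cite{DD1} (Theorem~3.1 and Lemma~3.2), noting that the case $\lambda>0$ with forcing $f$ requires only slight modifications, and adds that the continuity $C([0,\infty);H^1)$ follows from Theorem~3.2 and Proposition~3.5 of \cite{DD1} under assumption \eqref{EQ00c}.

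One minor discrepancy worth flagging: you frame the local theory in Bourgain's $X^{s,b}$ spaces, whereas \cite{DD1} --- and consequently this paper, see the space $X_1(T)$ defined in \eqref{EQ08} --- uses the Kenig--Ponce--Vega framework based on the local smoothing estimate $\Vert D\partial_x U_0(\cdot)u_0\Vert_{L^\infty_x L^2_T}\le C\Vert u_0\Vert_{L^2}$ together with the maximal function and Strichartz estimates; the contraction is run in $X_1(T)$ rather than in $X^{s,b}$. Both routes give the result here, but since the remainder of the paper (notably the proof of the Feller property in Section~\ref{secfeller} and Lemma~\ref{small-tau}) relies explicitly on $X_1(T)$ and the associated constants $\CC(T)$, the KPV setup is the one actually in play, and your sketch should be read with that substitution.
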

\cob
The theorem follows as in \cite{DD1} (Theorem 3.1 and Lemma 3.2) which treats the case $\lambda=0$ and it is thus omitted. The inclusion in $C([0,\infty);H^1(\dbR))$ is not explicitly mentioned in \cite{DD1}. However with 
the assumption 
\eqref{EQ00c}, we can use Theorem~3.2 and Proposition~3.5 of 
\cite{DD1} to obtain this inclusion.


\subsection{The Semigroup}
Let $u_0\in H^1(\dbR)$ be a deterministic initial condition, and let
$u$ be the corresponding solution of \eqref{kdv-intro}. For all $B\in {\cal B} (H^1(\dbR) )$ we define the transition probabilities of the equation by 
  \begin{equation}\label{transition-prob}
    P_t (u_0,B)=\dbP(u_t\in B)
   \period
  \end{equation}
For any function $\xi\in C_b(H^1;\dbR)$ and for $t\geq 0$ we denote 
  \begin{equation}\label{definition-psi}
    P_t \xi(u_0) =\dbE\left[\xi(u_t)\right]=\int_{H^1} \xi(u)P_t(u_0, du)
   \period
  \end{equation}
%


\startnewsection{The main results}{sec3}
We shall rely on the Krylov-Bogoliubov procedure (cf.~\cite[Corollary~3.1.2]{DZ}) to show the existence of invariant measures for the semigroup. 
The following statement is our main result.
 
\coe
\begin{Theorem}
 \label{theo-existence-invariant}
Suppose $\lambda>0$, and 
assume that $ f$ and $\Phi$ verify 
\eqref{EQ00a}--\eqref{EQ00c}.
Then there exists an invariant measure of the equation \eqref{kdv-intro}.  
\end{Theorem}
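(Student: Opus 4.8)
The plan is to verify the two hypotheses of the Krylov--Bogoliubov theorem \cite[Corollary~3.1.2]{DZ} for the transition semigroup $\{P_t\}$ of \eqref{definition-psi}: the Feller property, and the tightness on $H^1(\dbR)$ of the averaged measures $R_T(u_0,\cdot)=\frac1T\int_0^T P_t(u_0,\cdot)\,dt$ along a sequence $T=T_n\to\infty$, for a conveniently chosen deterministic $u_0$.

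For the \textbf{Feller property} one must show that $u_0\mapsto P_t\xi(u_0)=\dbE[\xi(u_t)]$ is continuous on $H^1(\dbR)$ for every $\xi\in C_b(H^1;\dbR)$; the difficulty is the absence of a priori bounds up to deterministic times. I would fix $R>0$ and, for solutions $u$ and $u^{(k)}$ issued from $u_0$ and $u_0^{(k)}\to u_0$ in $H^1(\dbR)$, set $\tau_R=\inf\{t\ge 0:\ \Vert u_t\Vert_{H^1}\vee\Vert u^{(k)}_t\Vert_{H^1}\ge R\}$; on $[0,\tau_R]$ the mild formulation \eqref{kdv-mild} and the dispersive estimates for $U_\lambda$ from \cite{DD1} give, by a Gronwall argument, that $u^{(k)}_{t\wedge\tau_R}\to u_{t\wedge\tau_R}$ in $H^1(\dbR)$ in probability. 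Splitting $\dbE[\xi(u^{(k)}_t)-\xi(u_t)]$ over $\{\tau_R>t\}$ and its complement (using $\Vert\xi\Vert_\infty<\infty$ on the latter), and then letting $R\to\infty$ — legitimate since $\tau_R\to\infty$ a.s.\ by the global regularity $u\in C([0,\infty);H^1(\dbR))$ of Theorem~\ref{existence-solution} — yields the continuity.

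The next step is a \textbf{uniform-in-time moment bound} $\sup_{t\ge0}\dbE\Vert u_t\Vert_{H^1}^2\le C$. Itô's formula for $\Vert u_t\Vert_{L^2}^2$ gives $\frac{d}{dt}\dbE\Vert u_t\Vert_{L^2}^2=-2\lambda\,\dbE\Vert u_t\Vert_{L^2}^2+2\,\dbE(u_t,f)+\Vert\Phi\Vert_{{\rm HS}(L^2,L^2)}^2$, since $\partial_x^3 u$ and $u\partial_x u$ are conservative in $L^2$; with $\lambda>0$ and Young's inequality this produces a uniform $L^2$ bound. Itô's formula for the KdV energy $\frac12\Vert\partial_x u_t\Vert_{L^2}^2-\frac16\int_\dbR u_t^3\,dx$, which is conserved by the undamped forceless flow, together with Gagliardo--Nirenberg control of $\int u^3$, the uniform $L^2$ bound, $\lambda>0$, and the Itô correction estimated through $\Phi\in{\rm HS}(L^2;H^{3+})$ from \eqref{EQ00c}, yields in the same way $\sup_{t\ge0}\dbE\Vert u_t\Vert_{H^1}^2\le C$; if $u_0\in H^3(\dbR)$, the analogous computation with the next conserved quantity gives a uniform $H^3$ bound as well.

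By Chebyshev, the uniform $H^1$ bound makes $\{R_{T_n}(u_0,\cdot)\}$ tight on $L^2_{\rm loc}(\dbR)$, since $H^1(\dbR)$-balls are relatively compact there (Rellich on bounded intervals plus a diagonal argument); pass to a subsequence with $R_{T_n}\rightharpoonup\mu$ in $L^2_{\rm loc}(\dbR)$. The crucial and hardest point — the content of the Appendix — is to promote this to tightness on $L^2(\dbR)$, i.e.\ to show that no $L^2$ mass escapes to spatial infinity, which probabilistically means the convergence of second moments $\int\Vert v\Vert_{L^2}^2\,d\mu(v)=\lim_n\int\Vert v\Vert_{L^2}^2\,dR_{T_n}(v)$ rather than the mere lower-semicontinuity inequality. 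I would obtain the requisite asymptotic compactness via the Aldous criterion — controlling increments of $t\mapsto u_t$ over short time intervals uniformly in the starting time, combined with a cut-off tail-energy estimate (testing \eqref{kdv-intro} against $u\,\theta(x/\rho)^2$ and exploiting the gain from $\lambda>0$ to bound the expected $L^2$ mass of $u_t$ outside $[-\rho,\rho]$ uniformly in $t$) — and then invoke the result of \cite{Pr} on convergence in measure in Hilbert spaces, which converts weak $L^2_{\rm loc}$ convergence together with convergence of the $L^2(\dbR)$-norms into tightness on $L^2(\dbR)$; the uniform $H^1$ and $H^3$ bounds finally lift this to tightness on $H^1(\dbR)$. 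With the Feller property and this tightness, Krylov--Bogoliubov produces an invariant measure. The main obstacle is precisely this last step: with neither compactness nor smoothing available, proving the convergence of norms — equivalently the uniform-in-time smallness of the $L^2$ mass at infinity and its compatibility with the time average — is what forces the detour through a Prokhorov-type convergence-in-measure criterion in place of a direct compactness argument.
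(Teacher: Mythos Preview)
Your overall strategy---Krylov--Bogoliubov via Feller plus tightness of time averages---matches the paper, and so do the uniform $L^2$ and $H^1$ moment bounds. But the technical execution of both main lemmas diverges from the paper in ways worth flagging.

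For the Feller property, a direct ``Gronwall in $H^1$ on $[0,\tau_R]$'' does not go through: the KdV nonlinearity is not Lipschitz in $H^1$ without the dispersive smoothing encoded in the auxiliary space $X_1(T)$ of \cite{DD1}. The paper instead runs the $X_1$-contraction on a random sequence of short subintervals $[\tau_k,\tau_{k+1}]$, defined through the $X_1$-norm of the stochastic convolution $\overline u$, obtains $\|u-v\|_{H^1}\le (2\tilde C)^{k+1}\|u_0-v_0\|_{H^1}$ on $\{\tau_k\ge t\}$, and uses the law of large numbers to show $\tau_n\to\infty$. Your stopping time $\tau_R$ and an honest $X_1$-argument on short pieces would work, but ``Gronwall'' undersells what is needed.

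For tightness, the paper does \emph{not} use a spatial cut-off tail estimate at all. The authors explicitly note that direct approaches fail and instead prove the norm convergence $\lim_n\dbE\|u^n_{t_n}\|_{L^2}^2=\dbE\|\xi\|_{L^2}^2$ by contradiction: they use the Aldous criterion to get tightness of the \emph{path} measures $\nu^n$ on $C([0,T];L^2_{\rm loc})$, show any limit $\nu$ is a martingale solution of \eqref{kdv-intro}, and then exploit the $L^2$ energy identity under $\nu$ together with the damping $\lambda>0$ to reach a contradiction with a persistent gap $\epsilon$. Your direct localization (testing against $u\,\theta(x/\rho)^2$) is a genuinely different route; it is the deterministic-attractor strategy of \cite{L,GR} and may well succeed here, but note that the commutator terms from $\partial_x^3$ require the uniform $H^1$ moment, and this is not the argument the paper gives. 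Finally, for the lift from $L^2$- to $H^1$-tightness, the paper does \emph{not} establish uniform-in-time $H^3$ bounds (Theorem~\ref{existence-solution} only gives finite-time $H^3$ control); instead it repeats the same norm-convergence machinery with the second invariant $I(v)=\int(\partial_x v)^2-\tfrac13 v^3$, proving $\dbE[I(\xi)]=\lim_n\dbE[I(u^n_{t_n})]$ and hence convergence of $\dbE\|\partial_x u^n_{t_n}\|_{L^2}^2$. Your $H^3$ route would require the next KdV conserved quantity and a fresh It\^o computation that the paper avoids.
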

\cob

The proof is based on the following two lemmas.
The first lemma states that the Feller property holds for the stochastic damped KdV equation.

\coe
\begin{Lemma}{(Feller Property)}
\label{theo-feller}
 Under the assumptions of Theorem~\ref{theo-existence-invariant}, the semigroup 
$P_t$ is Feller on $H^1 (\dbR)$. Namely, for $\xi\in C_b(H^1,\dbR)$ and with $u_0^1, u_0^2, \ldots\in H^1(\dbR)$ satisfying $\Vert u^n_0-u_0\Vert_{H^1}\to 0$ as $n\to\infty$, 
where $u_0\in H^1(\dbR)$, the convergence 
 \begin{equation}P_t \xi(u_0^n)\to P_t \xi(u_0)
   \comma n\to\infty
 \end{equation} 
holds for all $t\ge0$.
\end{Lemma}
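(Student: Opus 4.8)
The plan is to prove the Feller property by a stopping-time truncation argument, exploiting the continuity-in-probability of the solution map on bounded time intervals and the uniform moment bounds supplied by Theorem~\ref{existence-solution}. Fix $t\ge 0$, $\xi\in C_b(H^1,\dbR)$, and a sequence $u_0^n\to u_0$ in $H^1(\dbR)$. For each $n$ let $u^n$ be the solution of \eqref{kdv-intro} with initial datum $u_0^n$ and $u$ the solution with datum $u_0$; all of these live on the same stochastic basis driven by the same $W$. We want $\dbE[\xi(u^n_t)]\to \dbE[\xi(u_t)]$. Since $\xi$ is bounded and continuous on $H^1(\dbR)$, by dominated convergence it suffices to show $u^n_t\to u_t$ in $H^1(\dbR)$ in probability, i.e. that for every $\varepsilon>0$, $\dbP(\Vert u^n_t-u_t\Vert_{H^1}>\varepsilon)\to 0$.

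First I would introduce the stopping times
  \begin{equation}
   \tau^n_R=\inf\{s\ge 0: \Vert u^n(s)\Vert_{H^1}\ge R\}\wedge\inf\{s\ge 0: \Vert u(s)\Vert_{H^1}\ge R\}
   \period
  \end{equation}
Because $u_0^n\to u_0$ in $H^1$, the initial norms $\Vert u_0^n\Vert_{H^1}$ are uniformly bounded, so Theorem~\ref{existence-solution} gives a bound on $\dbE[\sup_{s\le t}\Vert u^n(s)\Vert_{H^1}^2]$ and $\dbE[\sup_{s\le t}\Vert u(s)\Vert_{H^1}^2]$ uniform in $n$; hence by Chebyshev $\dbP(\tau^n_R\le t)\le C/R^2$ uniformly in $n$, so this event can be made small uniformly by taking $R$ large. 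On the complementary event $\{\tau^n_R>t\}$ both solutions stay in the $H^1$-ball of radius $R$ up to time $t$, where the KdV nonlinearity $u\partial_x u$ is effectively Lipschitz in the energy spaces (the quadratic term is controlled by $R$). The next step is therefore a pathwise/stopped difference estimate: writing $w^n=u^n-u$ in mild form,
  \begin{equation}
   w^n(s)=U_\lambda(s)(u_0^n-u_0)-\int_0^s U_\lambda(s-r)\bigl(u^n(r)\partial_x u^n(r)-u(r)\partial_x u(r)\bigr)\,dr
   \comma
  \end{equation}
(the $f$ and $\Phi dW$ terms cancel), and running the same Bourgain-space/energy estimates used in \cite{DD1} to prove local well-posedness — now applied to the difference equation with coefficients frozen at size $R$ — to obtain, for the stopped processes, a bound of the form $\dbE[\sup_{s\le t\wedge\tau^n_R}\Vert w^n(s)\Vert_{H^1}^2]\le C(R,t)\Vert u_0^n-u_0\Vert_{H^1}^2$, possibly after first closing the estimate on a short time interval $[0,T_0]$ with $T_0=T_0(R)$ and then iterating $\lceil t/T_0\rceil$ times. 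Then
  \begin{equation}
   \dbP(\Vert u^n_t-u_t\Vert_{H^1}>\varepsilon)\le \dbP(\tau^n_R\le t)+\dbP\bigl(\Vert w^n(t\wedge\tau^n_R)\Vert_{H^1}>\varepsilon\bigr)\le \frac{C}{R^2}+\frac{C(R,t)}{\varepsilon^2}\Vert u_0^n-u_0\Vert_{H^1}^2
   \comma
  \end{equation}
and choosing $R$ large then letting $n\to\infty$ finishes the argument.

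The main obstacle is the difference estimate on $\{\tau^n_R>t\}$: the KdV equation has no estimates up to deterministic times (as the introduction stresses), so one cannot simply invoke a global Lipschitz dependence. The fix is to transfer the \emph{local} (short-time, size-$R$-dependent) well-posedness machinery of \cite{DD1} — the linear dispersive estimates for $U_\lambda$ combined with the bilinear estimate for $u\partial_x u$ in the relevant $X^{s,b}$/Bourgain space — to the linearized/difference equation, where the noise has dropped out so the estimates are purely deterministic and pathwise on the stopped interval. A technical point to watch is that the natural contraction space for \cite{DD1} is a Bourgain-type space rather than $C([0,T];H^1)$, so the stopping times should really be defined using the relevant restriction-norm on subintervals, or alternatively one works on the short interval $[0,T_0(R)]$ where the $C_tH^1_x$ norm and the Bourgain norm are comparable, and then iterates; propagating the smallness of $\Vert u_0^n-u_0\Vert_{H^1}$ through the $O(t/T_0(R))$ iterations (each step multiplying the constant) is routine but must be done carefully so that the final constant $C(R,t)$, though possibly large, is finite and independent of $n$.
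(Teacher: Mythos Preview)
Your overall architecture matches the paper's: control the event that either solution leaves an $H^1$-ball of radius $R$ (the paper's Lemma~\ref{finite-time-H1} plays the role of your uniform moment bound), and on the complement run a contraction/difference estimate in the local-well-posedness space of \cite{DD1}, iterated across subintervals, to get a pathwise Lipschitz bound $\Vert u(t)-v(t)\Vert_{H^1}\le C(R,t)\Vert u_0-v_0\Vert_{H^1}$, then conclude by dominated convergence.

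The gap is in the iteration step. A \emph{deterministic} interval length $T_0(R)$ does not work: the contraction in $X_1(T)$ requires smallness of $\CC(T)T^{1/2}(\Vert u\Vert_{X_1(T)}+\Vert v\Vert_{X_1(T)})$, and the $X_1$-norms of the solutions are governed not by $R$ alone but by $L_0=\CC(t)R+\Vert\overline u\Vert_{X_1(s)}+\Vert\int_0^\cdot U_\lambda(\cdot-r)f\,dr\Vert_{X_1(s)}$, where $\overline u$ is the stochastic convolution. Since $\Vert\overline u\Vert_{X_1(s)}$ is a genuinely random quantity with no pathwise bound, there is no deterministic $T_0$ on which the contraction closes pathwise, and your claim that the $C_tH^1_x$ and $X_1$ norms are ``comparable on short intervals'' is false: the components $L^2_x L^\infty_t$, $L^\infty_x L^2_t$ for $D\partial_x u$, and $L^4_t L^\infty_x$ encode dispersive smoothing not captured by $C_tH^1_x$.

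The paper resolves exactly this point by making the subinterval endpoints \emph{random}: it defines stopping times $\tau_0<\tau_1<\cdots$ by
\[
\tau_{k+1}=\inf\Bigl\{s\ge\tau_k: 8\CC(t)(s-\tau_k)^{1/2}\Bigl(\CC(t)R+\Vert\overline u^{\tau_k}\Vert_{X_1(\tau_k,s)}+\Bigl\Vert\textstyle\int_{\tau_k}^\cdot U_\lambda(\cdot-r)f\,dr\Bigr\Vert_{X_1(\tau_k,s)}\Bigr)>1\Bigr\},
\]
so that on each $[\tau_k,\tau_{k+1}]$ the contraction constant is $\le 1/2$ by construction. The increments $\tau_{k+1}-\tau_k$ are i.i.d.\ with strictly positive mean (this uses that $\overline u^{\tau_k}$ depends only on the Brownian increments after $\tau_k$), so by the law of large numbers $\tau_n\to\infty$ a.s., and one can choose a \emph{finite} $n=n(R,\epsilon)$ with $\dbP(\tau_n\le t)\le\epsilon/(6M)$. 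On $A\cap\{\tau_n\ge t\}$ the iterated contraction gives $\Vert u(t)-v(t)\Vert_{H^1}\le(2\CC(t))^{n+1}\Vert u_0-v_0\Vert_{H^1}$ (the paper's Lemma~\ref{small-tau}). You do mention in passing that ``the stopping times should really be defined using the relevant restriction-norm on subintervals''---that is precisely the correct route, and the missing ingredient in your write-up is this i.i.d./law-of-large-numbers mechanism that guarantees finitely many random subintervals suffice with high probability.
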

\cob

The second lemma asserts tightness
of averages originating from the initial
datum $u_0=0$.

\coe
\begin{Lemma}{(Tightness)}
 \label{lemma-tightness}
Under the assumptions of Theorem~\ref{theo-existence-invariant}, the family 
of measures
on $H^{1}({\mathbb R})$ 
  \begin{equation}
    \mu_n(\cdot)=\frac{1}{n}\int_0^n P_t(0,\cdot)\,dt   
   \comma n=1,2,\ldots
   \label{EQ02}
  \end{equation}
is tight.
\end{Lemma}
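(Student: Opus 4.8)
The plan is to prove tightness of the family $\{\mu_n\}$ in $H^1(\dbR)$ by establishing two ingredients: uniform-in-time moment bounds for the solution $u(t)$ starting from $u_0=0$, and an asymptotic compactness argument that upgrades tightness in $L^2_{\rm loc}(\dbR)$ to tightness in $H^1(\dbR)$ using the convergence-of-norms result proved in the Appendix. First, using the damping $\lambda>0$ together with It\^o's formula applied to $\Vert u(t)\Vert_{L^2}^2$ and to the $H^1$-energy (the latter exploiting the conservation-law structure of KdV, i.e. the Hamiltonian $\int (\tfrac12 (\partial_x u)^2 - \tfrac16 u^3)\,dx$), I would derive estimates of the form $\sup_{t\geq 0}\dbE\Vert u(t)\Vert_{H^1}^2 \leq C$ and, more importantly, a uniform bound on $\frac1n\int_0^n \dbE\Vert u(t)\Vert_{H^1}^2\,dt$, where $C$ depends only on $\lambda$, $\Vert f\Vert_{H^3}$, and $\Vert\Phi\Vert_{{\rm HS}(L^2;H^{3+})}$ but not on $n$. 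The damping term $-\lambda\Vert u\Vert^2$ is what makes these bounds uniform rather than growing with time; the cubic nonlinearity in the Hamiltonian must be controlled by interpolation ($\Vert u\Vert_{L^3}^3 \lesssim \Vert u\Vert_{L^2}^{5/2}\Vert\partial_x u\Vert_{L^2}^{1/2}$) and absorbed.

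Next, these uniform bounds immediately give, via Markov's inequality and Chebyshev, that for every $R>0$ the truncated balls $\{\Vert u\Vert_{H^1(-R,R)} \leq M\}$ carry most of the mass of $\mu_n$ uniformly in $n$; combined with the compact embedding $H^1(-R,R)\hookrightarrow\hookrightarrow L^2(-R,R)$ this yields tightness of $\{\mu_n\}$ regarded as measures on $L^2_{\rm loc}(\dbR)$. The step from here to tightness on $L^2(\dbR)$ (and then $H^1(\dbR)$) is the part that requires genuine care: I would show there is no mass escaping to spatial infinity by proving a tail estimate, namely that $\sup_n \frac1n\int_0^n \dbE\big[\int_{|x|>R} (u^2 + (\partial_x u)^2)\,dx\big]\,dt \to 0$ as $R\to\infty$. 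This is where I expect the main obstacle: the KdV flow transports mass, and establishing decay of the tail uniformly in $n$ is not automatic. The natural device is a weighted energy estimate — apply It\^o to $\int \phi_R(x)(u^2 + \text{(first-order terms)})\,dx$ for a suitable cutoff $\phi_R$ supported in $|x|>R/2$ — and exploit that $f\in H^3(\dbR)$ and $\Phi\in{\rm HS}(L^2;H^{3+})$ have controlled tails (the $e_i$ are compactly supported), so the forcing contributes only an $o_R(1)$ amount. One must be careful that the dispersive term $\partial_x^3 u$ and the transport term $u\partial_x u$, after integration by parts against $\phi_R$, produce commutator terms involving $\phi_R'$ that are controlled by the (uniformly bounded) $H^1$ energy.

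Finally, having tightness in $L^2_{\rm loc}(\dbR)$ plus the no-escape-of-mass tail bound, I would invoke the result from \cite{Pr} on convergence in measure in Hilbert spaces, exactly as sketched in the Introduction: along a subsequence $\mu_{n_k}$ converges weakly on $L^2_{\rm loc}$ to some $\mu$, the uniform tail bound forces $\mu$ to be supported on $L^2(\dbR)$ with $\int \Vert u\Vert_{L^2}^2\,d\mu \leq \liminf \int\Vert u\Vert_{L^2}^2\,d\mu_{n_k}$, and the convergence-of-norms statement proved in the Appendix gives equality of the second moments, which by \cite{Pr} promotes $L^2_{\rm loc}$-weak convergence to genuine weak convergence on $L^2(\dbR)$; the analogous argument at the $H^1$ level, using the uniform $H^1$-energy bound and the convergence of $H^1$-norms, yields tightness of $\{\mu_n\}$ on $H^1(\dbR)$. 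The two hardest points, in decreasing order, are the uniform-in-$R$ spatial tail estimate and the verification that the convergence-of-norms hypothesis of \cite{Pr} is met; the uniform moment bounds, while technical, are a fairly standard It\^o-plus-damping computation.
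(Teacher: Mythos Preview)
Your plan contains a genuine gap at the $H^1$ step. The weighted-energy tail estimate you propose for $\int_{|x|>R}(\partial_x u)^2\,dx$ does not close: applying It\^o to the localized Hamiltonian density $\int\phi_R\bigl[(\partial_x u)^2-\tfrac{1}{3}u^3\bigr]\,dx$, the commutator coming from $-2\int\phi_R\, u_x\,\partial_x(\partial_x^3 u)\,dx$ produces, after two integrations by parts, a term $-3\int\phi_R'\,(\partial_x^2 u)^2\,dx$. For a two-sided cutoff (equal to $1$ near $\pm\infty$, vanishing near the origin) the derivative $\phi_R'$ changes sign, so this term has no sign and involves $\Vert\partial_x^2 u\Vert_{L^2}^2$, which is not controlled by your $H^1$ estimates. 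This is the familiar one-sidedness of the Kato smoothing effect for KdV; localized energy identities behave well for monotone cutoffs but not for two-sided ones, and a spatial tail estimate on $\dbR$ requires a two-sided cutoff. (At the $L^2$ level your weighted estimate does close---the dispersive commutator there involves only $u$ and $\partial_x u$---but you cannot promote this to $H^1$ by the same mechanism.)

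The paper avoids localized estimates entirely and takes a different route. It first proves an asymptotic-compactness statement (Lemma~\ref{tightness-p}): for any bounded sequence of deterministic data and any $t_n\to\infty$, the laws $\{P_{t_n}(u_0^n,\cdot)\}$ are tight in $H^1$. The passage from $L^2_{\rm loc}$-tightness to $L^2$- and then $H^1$-tightness is via convergence of the \emph{global} quantities $\dbE\Vert u^n_{t_n}\Vert_{L^2}^2$ and $\dbE[I(u^n_{t_n})]$, and the Appendix does \emph{not} prove these by a tail estimate. The mechanism is a contradiction argument: assuming a gap $\epsilon$, one looks at the paths on a window $[t_n-T,t_n]$ with $T$ large, uses the Aldous criterion (Lemma~\ref{compactness-of-measures}) to get tightness of the path laws in $C([0,T];L^2_{\rm loc})$, identifies any limit as a martingale solution (Lemma~\ref{limiting-measure}), and then exploits the exact evolution of $\dbE\Vert z_t\Vert_{L^2}^2$ (resp.\ $\dbE[I(z_t)]$) under the damping $\lambda$---which makes the initial contribution $e^{-2\lambda T}$-small---to force the gap below $\epsilon$. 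Finally, tightness of the time averages $\mu_n$ is not obtained by working with $\mu_n$ directly; instead the asymptotic compactness at integer times is combined with a separate short-time result (Lemma~\ref{tightness-on-compacts}) via the Markov property. Your outline conflates the tail-estimate route with the convergence-of-norms route; the paper uses only the latter, and the former does not go through at the $H^1$ level for the reason above.
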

\cob

The proof of Lemma~\ref{theo-feller} is provided in
Section~\ref{secfeller},
while the proof of tightness is given in 
Section~\ref{secasy}.

\startnewsection{Proofs}{sec4}


\subsection{Uniform bounds in $L^2(\dbR)$}

The next statement establishes
bounds on
$\dbE[\Vert u(t) \Vert^2_{L^2}]$
which are  uniform 
in $t$.

\coe
\begin{Lemma}\label{main-bounds}
Under the assumptions of Theorem~\ref{theo-existence-invariant}, 
there exists a sequence $\{C_k\}_{k\geq 1}$ depending on $f$, $\Phi$, 
and $\lambda$ such that
 \begin{align}
   \sup_{t\geq 0} \dbE \left[ \Vert u(t)\Vert_{L^2}^{2k}\right]\leq C_k(\lambda,\Phi)(\dbE\left[\Vert u_0\Vert_{L^2}^{2k}\right]+1)
   \label{EQ54}
 \end{align} 
holds for all $k\in{\mathbb N}$ for which
$\dbE\left[\Vert u_0\Vert_{L^2}^{2k}\right]<\infty$.
\end{Lemma}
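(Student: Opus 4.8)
The plan is to apply It\^o's formula to the functional $\varphi(u)=\Vert u\Vert_{L^2}^{2k}$ along the solution of \eqref{kdv-intro} and exploit the sign of the damping term $\lambda u$ together with the cancellation of the transport nonlinearity in $L^2$. First I would note that for smooth solutions the quantity $\Vert u(t)\Vert_{L^2}^2$ satisfies, formally,
  \begin{equation}
   d\Vert u\Vert_{L^2}^2
   = \left(-2\lambda \Vert u\Vert_{L^2}^2 + 2(f,u) + \Vert\Phi\Vert_{{\rm HS}(L^2,L^2)}^2\right)dt
     + 2(u,\Phi\,dW),
   \nonumber
  \end{equation}
since $\int_\dbR u\,\partial_x^3 u = 0$ and $\int_\dbR u^2\partial_x u = 0$, and the trace term $\Vert\Phi\Vert_{{\rm HS}(L^2,L^2)}^2$ comes from the It\^o correction (finite by \eqref{EQ00c}). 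To make this rigorous at the $H^1$ (or $H^3$) regularity guaranteed by Theorem~\ref{existence-solution}, I would either work with a Galerkin/spectral truncation and pass to the limit, or invoke the infinite-dimensional It\^o formula directly on the mild solution as in \cite{DD1,DZ}; this justification is routine given the existing well-posedness theory, so I would state it briefly.

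Next, for the $2k$-th power I would apply It\^o's formula to $x\mapsto x^k$ with $x=\Vert u\Vert_{L^2}^2$, obtaining
  \begin{equation}
   d\Vert u\Vert_{L^2}^{2k}
   = k\Vert u\Vert_{L^2}^{2k-2}\left(-2\lambda\Vert u\Vert_{L^2}^2 + 2(f,u) + \Vert\Phi\Vert_{{\rm HS}(L^2,L^2)}^2\right)dt
     + \tfrac{1}{2}k(k-1)\Vert u\Vert_{L^2}^{2k-4}\cdot 4\Vert\Phi^* u\Vert_{L^2}^2\,dt
     + dM_t,
   \nonumber
  \end{equation}
where $M_t$ is a local martingale. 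Taking expectations (after a localization by stopping times $\tau_R$ to kill the martingale, then letting $R\to\infty$ with Fatou/monotone convergence, using the a~priori integrability $u\in L^2(\Omega;L^\infty(0,T;H^1))$ from Theorem~\ref{existence-solution}), the martingale drops out. Then I would bound the lower-order terms: $2(f,u)\le \epsilon\Vert u\Vert_{L^2}^2 + \epsilon^{-1}\Vert f\Vert_{L^2}^2$, and $\Vert\Phi^* u\Vert_{L^2}^2\le \Vert\Phi\Vert_{{\rm HS}(L^2,L^2)}^2\Vert u\Vert_{L^2}^2$, so that after choosing $\epsilon$ small relative to $\lambda$ and using Young's inequality to absorb the $\Vert u\Vert_{L^2}^{2k-2}$ and $\Vert u\Vert_{L^2}^{2k-2}$-weighted constant terms into $\Vert u\Vert_{L^2}^{2k}$, one arrives at a differential inequality of the form
  \begin{equation}
   \frac{d}{dt}\dbE\left[\Vert u(t)\Vert_{L^2}^{2k}\right]
   \le -\lambda k\,\dbE\left[\Vert u(t)\Vert_{L^2}^{2k}\right] + C_k(\lambda,\Phi,f).
   \nonumber
  \end{equation}
Gr\"onwall's inequality then yields $\dbE[\Vert u(t)\Vert_{L^2}^{2k}]\le e^{-\lambda k t}\dbE[\Vert u_0\Vert_{L^2}^{2k}] + C_k(\lambda,\Phi,f)/(\lambda k)$, and taking the supremum over $t\ge 0$ gives \eqref{EQ54}. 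An induction on $k$ is natural here, since the constant-in-time terms produced at level $k$ involve lower powers $\Vert u\Vert_{L^2}^{2j}$, $j<k$, whose suprema are already controlled.

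The main obstacle I anticipate is not the algebra but the rigorous justification of the It\^o computation at the available regularity: the solution is only known to live in $H^1$ (or $H^3$) almost surely, and $\partial_x^3 u$ is then not in $L^2$, so the identity $\int u\,\partial_x^3 u=0$ must be handled by approximation rather than by direct integration by parts against an $L^2$ pairing. The cleanest route is a spectral Galerkin approximation $u^N$ of \eqref{kdv-intro}, for which the finite-dimensional It\^o formula applies verbatim and the cancellations $\int u^N\partial_x^3 u^N=0$, $\int (u^N)^2\partial_x u^N=0$ hold exactly (provided the projection is chosen so these structures are preserved), derive the uniform bound \eqref{EQ54} for $u^N$ with a constant independent of $N$, and then pass to the limit using the convergence $u^N\to u$ furnished by the well-posedness theory and lower semicontinuity of the $L^2$ norm under weak limits. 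All other steps---the dissipative absorption, Young's inequality, Gr\"onwall, and the induction---are elementary.
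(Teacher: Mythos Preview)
Your proposal is correct and follows the same strategy as the paper: It\^o's formula on $\Vert u\Vert_{L^2}^2$ and then on its $k$-th power, exploiting the cancellations $(\partial_x^3 u,u)=(u\partial_x u,u)=0$, absorbing lower-order terms via Young's inequality into the damping, and closing with Gr\"onwall and induction on $k$. The paper differs only in the technical justification: rather than a Galerkin approximation, it localizes directly on the solution with stopping times $\sigma_n=\inf\{t:\int_0^t\Vert u_s\Vert_{L^2}^{2(k+1)}\,ds\geq n\}$ and shows $\sigma_n\to\infty$ a.s., and it explicitly carries an auxiliary finite-time bound $\dbE\bigl[\sup_{s\in[0,T]}\Vert u(s)\Vert_{L^2}^{2k}\bigr]\leq C_k(T)$ through the induction (proved via the Burkholder--Davis--Gundy inequality) in order to verify that the stochastic integral at level $k+1$ is a true martingale. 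Your phrase ``whose suprema are already controlled'' gestures at exactly this, but you should make that supremum bound a formal part of the inductive hypothesis and establish it at each step.
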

\cob

From here on, we shall consider $\lambda$, $f$,
and $\Phi$ fixed and thus
omit indicating the dependence of the constants on these
two quantities.

\begin{proof}[Proof of Lemma~\ref{main-bounds}]
We define 
  \begin{equation}
   \sigma_n
      =\left\{t\geq 0:\int_0^t \Vert u_s\Vert^2_{L^2}\,ds\geq n\right\}   
   \label{EQ04}
  \end{equation}
and
apply Ito's lemma to $\Vert u(t)\Vert_{L^2}^2$ in order to obtain 
  \begin{equation}\label{bound-u1}
    \Vert u_t\Vert_{L^2}^2+2\lambda\int_0^t \Vert u_s\Vert_{L^2}^2\,ds 
     =\Vert u_0\Vert_{L^2}^2
       +t\Vert\Phi\Vert^2_{{\rm HS}(L^2,L^2)}
       +  \int_0^t 2 (u_s,f)\, ds
       +2 M_t
  \end{equation}
for $0\leq t\leq \sigma_n$,
where 
  \begin{equation}
    M_t=\int_0^t \sum_i\int_\dbR u_s \Phi e_i \,dx \,d\beta^i_s   
   \period
  \end{equation}
We compute 
  \begin{align}
   \dbE[M_{t\wedge \sigma_n}^2]
    &= \dbE\left[\int_0^{t\wedge \sigma_n}\sum_i (u_s,\Phi e_i)^2
    ds\right]
   \leq \Vert \Phi \Vert_{HS(L^2,L^2)}^2 \dbE\left[\int_0^{t\wedge
   \sigma_n} \Vert u_s\Vert^2  ds\right]
   \nonumber\\&
   \leq 
     n\Vert \Phi \Vert_{HS(L^2,L^2)}^2 t<\infty,
  \end{align}
and thus $M_{t\wedge \sigma_n}$ is  a martingale. 
Taking the expectation of  both sides of the equation \eqref{bound-u1}
at $t\wedge \sigma_n$, we get
  \begin{align}
      &\dbE
       \Vert u(t\wedge \sigma_n)\Vert_{L^2}^2
        +2\lambda\dbE\left[\int_0^{t\wedge \sigma_n} \Vert u(s)\Vert_{L^2}^2\,ds \right]
      \nonumber\\&\indeq
      =\dbE\Vert u_0\Vert_{L^2}^2
           +\dbE[t\wedge \sigma_n]\Vert\Phi\Vert^2_{{\rm HS}(L^2,L^2)}
      + 2\dbE \left[\int_0^{t\wedge \sigma_n} (u_s,f) ds \right]
   \period
   \label{EQ05bis}
  \end{align}
Therefore, we obtain an upper bound 
   \begin{align}
      \dbE
       \Vert u(t\wedge \sigma_n)\Vert_{L^2}^2
        +\lambda\dbE\left[\int_0^{t\wedge \sigma_n} \Vert u(s)\Vert_{L^2}^2\,ds \right]
      &\leq \dbE\Vert u_0\Vert_{L^2}^2+t\Vert\Phi\Vert^2_{{\rm HS}(L^2,L^2)}+ \frac{t\Vert f\Vert^2_{L^2}}{\lambda}
  \end{align}
which is uniform in $n$.
Note that we have
  \begin{equation}
    \int_0^{t\wedge \sigma_n} \Vert u(s)\Vert_{L^2}^2\,ds
      =
          \int_0^t  \Vert u(s)\Vert_{L^2}^2\,ds \mathbf{1}_{\{\sigma_n>t\}}
           +n \mathbf{1}_{\{\sigma_n\leq t\}}
   \label{EQ06}
  \end{equation}
from where
  \begin{align}\label{prop-sigma-n}
\lambda n \dbP(\sigma_n \leq t)\leq \lambda\dbE\left[\int_0^{t\wedge \sigma_n} \Vert u(s)\Vert_{L^2}^2\,ds \right]\leq C(t)
   \period
\end{align}
Taking the limit as $n$ goes to infinity, we see that the stopping time $\sigma^*= \lim_{n\to\infty } \sigma_n$ verifies  $\dbP(\sigma^*=\infty)=1$.
We now return to the equality \eqref{EQ05bis}. Using the dominated
convergence theorem and the fact that $\Vert u(s)\Vert_{L^2}$ is
continuous, we obtain
  \begin{align}
      \dbE
       \Vert u(t)\Vert_{L^2}^2
        +2\lambda\dbE\left[\int_0^{t} \Vert u(s)\Vert_{L^2}^2\,ds \right]
      &=\dbE\Vert u_0\Vert_{L^2}^2
           +t\Vert\Phi\Vert^2_{{\rm HS}(L^2,L^2)}
      + 2\dbE \left[\int_0^{t} (u_s,f) ds \right]
   \period
  \end{align}  
We differentiate the previous equality and 
use the $\epsilon$-Young's inequality to control $\dbE (u_s,f)$ and obtain
 \begin{align}
     \frac{d }{dt}\dbE
       \Vert u(t)\Vert_{L^2}^2
        +\lambda\dbE \Vert u(t)\Vert_{L^2}^2\leq \Vert\Phi\Vert^2_{HS(L^2,L^2)} +\frac{\Vert f\Vert_{L^2}^2}{\lambda}
   \period
   \label{EQ55}
  \end{align}
Solving the resulting
equation, we get
  \begin{align}
     \dbE\bigl[\Vert u(t)\Vert_{L^2}^2\bigr]
       &\leq  e^{- \lambda t}\dbE\left[\Vert u_0\Vert_{L^2}^2\right]
        +
         \left(
             \Vert\Phi\Vert^2_{{\rm HS}(L^2;L^2)}
             + \frac{\Vert f\Vert_{L^2}^2}{\lambda}
         \right)
           \int_0^t e^{-\lambda (t-s)}  \,ds 
     \leq C(\dbE\left[\Vert u_0\Vert_{L^2}^2\right]+1),
  \end{align}
where the constant $C$ depends on $\Phi$ 
only through $\Vert\Phi\Vert^2_{{\rm HS}(L^2;L^2)}$.

In order to use the induction for $k\geq 1$, we need to control $\dbE\left[\sup_{0\leq s \leq t}\Vert u(s)\Vert^2_{L^2}\right]$. To achieve this, we return to \eqref{bound-u1} and obtain 
\begin{align}
\sup_{s\in [0,t]}\Vert u(s)\Vert^2_{L^2}\leq \Vert u_0\Vert^2_{L^2} +C(t) + C\sup_{s\in[0,t]}|M_s|
   \period
\end{align}
Then, by the Burkholder-Davis-Gundy inequality 
  \begin{align}
   \dbE[\sup_{s\in[0,t]}|M_s|]
     &\leq C\dbE\left[\left(\int_0^t \sum_i (u(s),\Phi e_i)^2 ds\right)^{1/2}\right]\nonumber\\
&\leq  C \left(\dbE\left[\int_0^t \Vert u(s)\Vert^2_{L^2}ds\right]\right)^{1/2}
  \leq C(t),
  \end{align}
which then gives $\dbE[\sup_{s\in[0,t]}\Vert u(s)\vert^2_{L^2}]
\leq C(t)$.

In order to use induction, suppose that for some $k\in{\mathbb N}$
  \begin{align}
 &\sup_{t\geq 0} \dbE \left[ \Vert u(t)\Vert_{L^2}^{2k}\right]\leq C(\dbE\left[\Vert u_0\Vert_{L^2}^{2k}\right]+1)<\infty
  \end{align}
and
  \begin{align} \dbE[\sup_{s\in[0,t]}\Vert u(s)\Vert^{2k}_{L^2}]\leq C_k(t)\label{iteration-k}
   \period
  \end{align}
Recall that, but the assumptions of the theorem,
  \begin{align}
    &\dbE \left[\Vert u_0\Vert_{L^2}^{2(k+1)}\right]<  \infty
   \period
  \end{align}
Denote $X_t=\Vert u(t)\Vert_{L^2}^2$ and, similarly to the previous case, let
  \begin{equation}
   \sigma_n
     =\inf\left\{t\geq 0: \int_0^t X^{k+1}_s ds \geq n\right\}
   \period
   \label{EQ56}
  \end{equation}
Applying Ito's lemma leads to
  \begin{align}\label{ito-for-power}
    dX_t^{k+1}&=(k+1)X_t^k
    \bigl(-2\lambda X_t dt +\Vert\Phi\Vert^2_{{\rm HS}(L^2,L^2)} dt 
     +2(u(t),f)dt
     +2dM_t
    \bigr)
      \nonumber\\&\indeq
      +2 ( k+1)k X_t^{k-1} \Vert\Phi^* u(t)\Vert_{L^2}^2 dt
   \nonumber\\&
   \leq -\lambda (k+1)X_t^{k+1} dt+ C (X_t^k+1) dt +2(k+1) X_t^{k} dM_t
   \period
  \end{align}
The quadratic variation of the stochastic integral is proportional to 
  \begin{align}\label{control-quad-var-power}
    & \int_0^{T\wedge \sigma_n} X_t^{2k} d\langle M\rangle_t 
      \leq C\int_0^{T\wedge \sigma_n}  X_t^{2k} \sum_i (u(t),\Phi e_i)^2 dt
    \leq C  \int_0^{T\wedge \sigma_n}  X_t^{2k+1} \,dt \leq C n \sup_{t\in T\wedge \sigma_n} \Vert u(t)\Vert^{2k}_{L^2} \,dt
  \end{align}
where the brackets denote the quadratic variation and the last term is integrable due to \eqref{iteration-k}.
Thus $\int_0^{t\wedge \sigma_n} X_s^k dM_s$ is a square integrable
martingale. We take the expectation of \eqref{ito-for-power} and obtain
an upper bound 
   \begin{align}\label{ito-for-power2}
    \dbE[X_{t\wedge\sigma_n}^{k+1}]+2(k+1)\lambda \dbE\left[\int_0^{t\wedge\sigma_n}X_s^{k+1}ds\right]\leq 
     C(t)
  \end{align}
which is uniform in $n$.
Similarly to \eqref{prop-sigma-n}, we have 
   \begin{equation}\frac{1}{\sqrt{n}}\left(\int_0^{t\wedge\sigma_n}
  X_s^{2k+1}ds\right)^{1/2}\geq \mathbf{1}_{\{\sigma_n\leq t\}}
   \period
  \end{equation}
Thus
 \begin{align}
 \dbP(\sigma_n\leq t)&\leq \frac{1}{\sqrt{n}} \dbE\left[\left(\int_0^{t\wedge\sigma_n} X_s^{2k+1}ds\right)^{1/2}\right]\nonumber\\
 &\leq  \frac{1}{\sqrt{n}} \dbE\left[\left(\sup_{s\in[0,t]}X_s^k\right)^{1/2}\left(\int_0^{t\wedge\sigma_n} X_s^{k+1}ds\right)^{1/2}\right]\nonumber\\
  &\leq  \frac{C}{\sqrt{n}}\left( \dbE\left[\sup_{s\in[0,t]}X_s^k\right]+\dbE\left[\int_0^{t\wedge\sigma_n} X_s^{k+1}ds\right]\right)\leq\frac{C(t)}{\sqrt{n}}
 \end{align}
 which converges to $0$ as $n\to\infty$ 
due to \eqref{ito-for-power2} and the inductive assumption.
Thus the stochastic integral in \eqref{ito-for-power} is a martingale
  on $[0,\infty)$.
Taking the expectation of \eqref{ito-for-power}, we get
 \begin{align}
 d\dbE[X_t^{k+1}]+\lambda (k+1)\dbE[X_t^{k+1}]\,dt\leq C\,dt
 \end{align}
which implies
  \begin{align} 
 \dbE[X_t^{k+1}]
    &\leq e^{-\lambda (k+1)t}\dbE[X_0^{k+1}] 
    +C\int_0^t e^{-\lambda(k+1)(t-s)} \,ds
       \nonumber\\&
       \leq C \left(\dbE\left[\Vert u_0\Vert_{L^2}^{2(k+1)} \right]+1\right)
   \period
  \end{align}

In order to complete the proof, we 
also need to control $\sup_{t\in [0,T]} X_t^{k+1}$. 
Note first that $X_t\geq 0$.
From \eqref{ito-for-power}, we get
  \begin{equation}\sup_{t\in [0,T]} X_t^{k+1}\leq C(T)\left(X_0^{k+1}+  \sup_{t\in[0,T]}X_t^k\right) +\sup_{t\in[0,T]} \left|\int_0^t 2(k+1)X_s^{k}\,dM_s\right|
   \period
 \end{equation}
We use the BDG inequality to bound the expectation of the last term with the expectation of the square root of the quadratic variation of the stochastic integral computed in \eqref{control-quad-var-power} to obtain 
  \begin{equation}
    \dbE\left[\sup_{t\in [0,T]} X_t^{k+1} \right]\leq C(T)
  \end{equation}
and the proof is complete.
\end{proof}

\nnewpage
\subsection{Uniform bounds in $H^1(\dbR)$}

We now present certain uniform bounds on the $L^2$ norm of $\partial_x u $. 
Denote by 
  \begin{equation}\label{invariant-I}
   I(v)=\int_\dbR \left(\partial_x v(x)^2-\frac{v(x)^3}{3}\right)dx
   \comma v\in H^1(\dbR)
  \end{equation}
the second invariant of the deterministic KdV equation.
For ease of notation we define  $$\alpha(t)=\frac{\lambda}{3}\int u(t,x)^3 dx+\Vert\partial_x \Phi\Vert^2_{{\rm HS}(L^2,L^2)} 
              - \sum_i \int u(t,x)|(\Phi e_i )(x)|^2 \,dx
              +2(\partial_x u(t),\partial_x f)-(u^2(t),f).$$

\coe
\begin{Lemma}\label{lemma-invariant-I}
The evolution of $I(u(t))$ is given by
  \begin{align}
  dI(u(t))+2\lambda I(u(t)) dt 
     =\alpha(t) dt
  +2 (\partial_x u(t), \partial_x \Phi dW(t))-(u^2(t),\Phi dW_t)
 \label{evolution-I}
   \period
  \end{align}
Moreover,
  \begin{equation}\sup_{t\geq 0}
      \dbE\left[\Vert\partial_x u(t)\Vert_{L^2}^{2k}
           \right]
      \leq C\left(\dbE\left[\Vert u_0\Vert^{2k}_{H^1}+\Vert u_0\Vert_{L^2}^{4k}\right]+1\right)\period
   \label{EQ09}
  \end{equation}
for $k=1$ or $2$, where $C$  is a constant.
\end{Lemma}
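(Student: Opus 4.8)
The plan is to first establish the It\^o identity \eqref{evolution-I} and then to derive \eqref{EQ09} from it together with the uniform $L^2$ bounds of Lemma~\ref{main-bounds}. To obtain \eqref{evolution-I} I would write $I(v)=\Vert\partial_x v\Vert_{L^2}^2-\tfrac13\int_{\dbR}v^3\,dx$ and apply It\^o's lemma to the two summands separately --- first for $u_0\in H^3(\dbR)$, where Theorem~\ref{existence-solution} provides the regularity needed to justify the integrations by parts and the use of It\^o's lemma, and then passing to a general $u_0\in H^1(\dbR)$ by density and continuous dependence on the initial condition. For $\Vert\partial_x u\Vert_{L^2}^2$ the drift is $2(\partial_x u,\partial_x(-\partial_x^3u-u\partial_x u-\lambda u+f))$ and the It\^o correction is $\Vert\partial_x\Phi\Vert_{{\rm HS}(L^2,L^2)}^2$; upon integrating by parts the Airy term $-2(\partial_x u,\partial_x^4u)$ vanishes, the transport term contributes $-\int(\partial_x u)^3\,dx$, and the damping contributes $-2\lambda\Vert\partial_x u\Vert_{L^2}^2$. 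For $\int u^3\,dx$ the drift is $3\int u^2(-\partial_x^3u-u\partial_x u-\lambda u+f)\,dx$ and the It\^o correction is $3\sum_i\int u|\Phi e_i|^2\,dx$; here the transport term equals $-\tfrac34\int\partial_x(u^4)\,dx=0$, while the Airy term, after two integrations by parts, gives $-3\int(\partial_x u)^3\,dx$. Assembling $I=\Vert\partial_x u\Vert_{L^2}^2-\tfrac13\int u^3$, the cubic-in-$\partial_x u$ terms cancel, the two damping contributions combine --- using $\Vert\partial_x u\Vert_{L^2}^2=I+\tfrac13\int u^3$ --- into $-2\lambda I+\tfrac{\lambda}{3}\int u^3$, the leftover deterministic and correction terms assemble exactly into $\alpha(t)$, and the martingale part is $2(\partial_x u,\partial_x\Phi dW)-(u^2,\Phi dW)$; this is \eqref{evolution-I}. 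Exactly as in the proof of Lemma~\ref{main-bounds}, every expectation manipulation below is justified by stopping at $\tau_n=\inf\{t\ge0:\Vert u(t)\Vert_{H^1}\ge n\}$, which increase to $\infty$ almost surely since $u\in C([0,\infty);H^1(\dbR))$, carrying out each estimate at $t\wedge\tau_n$, and letting $n\to\infty$ with Fatou's lemma.

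For \eqref{EQ09} the decisive remark is that $I(u)$ differs from $\Vert\partial_x u\Vert_{L^2}^2$ only by quantities already controlled by Lemma~\ref{main-bounds}. By the Gagliardo-Nirenberg inequality $\Vert v\Vert_{L^3}^3\le C\Vert v\Vert_{L^2}^{5/2}\Vert\partial_x v\Vert_{L^2}^{1/2}$ and Young's inequality, $\tfrac13|\int v^3|\le\tfrac14\Vert\partial_x v\Vert_{L^2}^2+C'\Vert v\Vert_{L^2}^{10/3}$, so for $\kappa$ large enough the quantity
\[
 G(t):=I(u(t))+\kappa\Vert u(t)\Vert_{L^2}^4+C'+1
\]
obeys $\tfrac34\Vert\partial_x u(t)\Vert_{L^2}^2+1\le G(t)$, whence $G\ge1$ and $\Vert u(t)\Vert_{L^2}^4\le CG(t)$. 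The same Gagliardo-Nirenberg and Young estimates, applied to the terms of $\alpha$ --- using $f\in H^3(\dbR)\subset L^\infty(\dbR)$ for $(u^2,f)$, Cauchy-Schwarz for $(\partial_x u,\partial_x f)$, and $\sum_i\left|\int_{\dbR} u\,|\Phi e_i|^2\,dx\right|\le\Vert u\Vert_{L^2}\sum_i\Vert\Phi e_i\Vert_{L^4}^2\le C\Vert u\Vert_{L^2}\Vert\Phi\Vert_{{\rm HS}(L^2,H^{3+})}^2$ by Sobolev embedding --- give, for each $\epsilon>0$,
\[
 \alpha(t)\le\epsilon\,G(t)+C_\epsilon\bigl(1+\Vert u(t)\Vert_{L^2}^4\bigr).
\]
On the other hand, It\^o's lemma applied to $\Vert u\Vert_{L^2}^4$ starting from \eqref{bound-u1} yields $d(\Vert u\Vert_{L^2}^4)\le(-(4\lambda-\delta)\Vert u\Vert_{L^2}^4+C_\delta)\,dt+4\Vert u\Vert_{L^2}^2\,dM_t$ for any $\delta>0$. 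Adding $\kappa$ times this to \eqref{evolution-I}, substituting $I(u)=G-\kappa\Vert u\Vert_{L^2}^4-C'-1$, and choosing $\epsilon$ small, then $\kappa$ large (depending on $C_\epsilon,\lambda,C'$), then $\delta$ small, absorbs every $\Vert u\Vert_{L^2}^4$ term and leaves
\[
 dG(t)\le\bigl(-\lambda G(t)+C\bigr)\,dt+dN_t ,
\]
with $N_t$ a continuous local martingale whose bracket satisfies, by $\Vert u\Vert_{L^4}^4\le C\Vert u\Vert_{L^2}^3\Vert\partial_x u\Vert_{L^2}$ and the assumptions on $\Phi$, $d\langle N\rangle_t\le C(1+G(t)^{3/2})\,dt$.

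Finally, for $p\ge1$ I would apply It\^o's lemma to $e^{p\lambda t/2}G(t)^p$ (legitimate since $G\ge1$ and $t\mapsto t^p$ is $C^2$ on $[1,\infty)$), use Young's inequality to absorb the lower powers $G^{p-1},G^{p-1/2},G^{p-2}$ coming from $dG$ and $d\langle N\rangle$ into $G^p$, stop at $\tau_n$, take expectations, and pass to the limit by Fatou's lemma to obtain $\sup_{t\ge0}\dbE[G(t)^p]\le\dbE[G(0)^p]+C_p$. Since $G(0)\le C(\Vert u_0\Vert_{H^1}^2+\Vert u_0\Vert_{L^2}^4+1)$, this becomes $\sup_{t\ge0}\dbE[G(t)^k]\le C_k(\dbE[\Vert u_0\Vert_{H^1}^{2k}+\Vert u_0\Vert_{L^2}^{4k}]+1)$ for every $k$ for which the right-hand side is finite --- in particular for $k=1$ and $k=2$ --- and \eqref{EQ09} follows from $\Vert\partial_x u(t)\Vert_{L^2}^{2k}\le(\tfrac43)^kG(t)^k$. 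I expect the main obstacle to be exactly that $I(u)$ is sign-indefinite, so a Gronwall argument cannot be run on its moments directly: the remedy is to perturb $I(u)$ by the coercive and already-bounded term $\kappa\Vert u\Vert_{L^2}^4$ so as to produce a positive quantity $G$ whose evolution closes, after verifying through Gagliardo-Nirenberg that the cubic term $\int u^3$ and the cubic-type terms in $\alpha$ are genuinely subcritical relative to $\Vert\partial_x u\Vert_{L^2}^2$.
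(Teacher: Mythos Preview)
Your argument is correct, but it is organized differently from the paper's. For the identity \eqref{evolution-I} the paper simply cites \cite[Lemma~3.3]{DD1}, whereas you spell out the It\^o computation; your derivation is accurate and the cubic cancellation you describe is exactly the mechanism behind the identity.

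For the moment bounds the approaches diverge. The paper keeps $I(u(t))$ as the primary quantity: for $k=1$ it takes expectations in \eqref{evolution-I}, estimates $\alpha$ by $\lambda\Vert\partial_x u\Vert_{L^2}^2+C(\Vert u\Vert_{L^2}^4+1)$ via Agmon's inequality, and closes the resulting differential inequality for $\dbE[I(u(t))]$ using the two-sided comparison $\tfrac23\Vert\partial_x v\Vert_{L^2}^2-C\Vert v\Vert_{L^2}^{10/3}\le I(v)\le\tfrac43\Vert\partial_x v\Vert_{L^2}^2+C\Vert v\Vert_{L^2}^{10/3}$ quoted from \cite{R}; for $k=2$ it starts over, applies It\^o to $I^2(u(t))$, and uses stopping times and the BDG inequality. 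You instead absorb the sign-indefiniteness of $I$ once and for all by passing to the shifted quantity $G=I+\kappa\Vert u\Vert_{L^2}^4+C'+1\ge1$, derive a single dissipative inequality $dG\le(-\lambda G+C)\,dt+dN$ with $d\langle N\rangle\le C(1+G^{3/2})\,dt$, and then run It\^o on $G^p$ uniformly in $p$. Your route is more self-contained (you do not need the comparison from \cite{R}, since you rederive what you need via Gagliardo--Nirenberg) and treats all moments in one stroke; the paper's route is shorter for each fixed $k$ and avoids the bookkeeping of choosing $\epsilon,\kappa,\delta$ in the right order. Both are valid proofs of the lemma.
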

\cob

\begin{proof}[Proof of Lemma~\ref{lemma-invariant-I}]
The identity \eqref{evolution-I}
follows by Ito's formula as in~Lemma 3.3 of \cite{DD1}.
The quadratic variations of the stochastic integrals 
in \eqref{evolution-I}
equal
  \begin{align}\label{quad-var-H1}
   \bigl\langle (\partial_x u(t), \partial_x \Phi dW(t))\bigr\rangle&=\sum_i   (\partial_x u(t), \partial_x \Phi e_i)^2 dt \leq \Vert\partial_x u(t)\Vert_{L^2}^2 \Vert \Phi\Vert^2_{{\rm HS}(L^2,H^1)}dt
  \end{align}
(the brackets denoting the quadratic variation)
and
  \begin{align}
 \bigl\langle (u^2(t),\Phi dW_t)\bigr\rangle&=\sum_i (u^2(t),\Phi e_i)^2 dt\leq \Vert u(t)\Vert_{L^2}^4 \sum_i \Vert\Phi e_i\Vert_{\infty}^2 dt
   \nonumber\\&
     \leq \Vert u(t)\Vert_{L^2}^4 \sum_i \Vert\Phi e_i\Vert_{L^2} \Vert\partial_x \Phi e_i\Vert_{L^2} dt \leq \Vert u(t)\Vert_{L^2}^4 \Vert\Phi\Vert^2_{{\rm HS}(L^2,H^1)}dt
   \period
   \label{EQ14}
  \end{align}
Theorem~\ref{existence-solution}, combined with the previous inequalities,
shows that the stochastic integrals define martingales. 

Now, we estimate $\alpha$. 
Using Agmon's inequality
  \begin{equation}
 \Vert u\Vert_{L^\infty}\leq \Vert u\Vert_{L^2}^{\fractext{1}{2}} \Vert\partial_x u \Vert_{L^2}^{\fractext{1}{2}}
   \period
   \label{EQ01}
  \end{equation}
we obtain
  \begin{equation}\label{estimate_alpha}
  |\alpha(t)|\leq C\left(\Vert u\Vert_{L^2}^{5/2}\Vert \partial_x u\Vert_{L^2}^{1/2}+\Vert u\Vert_{L^2}+\Vert \partial_x u\Vert_{L^2}+\Vert u\Vert_{L^2}^{3/2}\Vert \partial_x u\Vert_{L^2}^{1/2}\right)
  \end{equation}
Then we use the $\epsilon$-Young inequality and obtain 
  \begin{equation}
|\alpha(t)|    \leq   {\lambda}\Vert\partial_x u\Vert_{L^2}^2 + C(\Vert u\Vert_{L^2}^{10/3}+1)\leq  {\lambda}\Vert\partial_x u\Vert_{L^2}^2 + C(\Vert u\Vert_{L^2}^{4}+1)
   \period 
  \end{equation}
Inserting these inequalities into \eqref{evolution-I}
and taking expectation,
we obtain
  \begin{align}\label{bound-I-diff}
     & d \dbE[I(u(t))]+2\lambda \dbE[I(u(t))]dt 
      \leq  {\lambda } \dbE[\Vert\partial_x u(t)\Vert_{L^2}^2] dt + C\left(1+\dbE\left[\Vert u_0\Vert_{L^2}^{4}\right]\right)dt
   \period
  \end{align}
For all $v\in H^1(\dbR)$ the inequalities in \cite[Section~5]{R} give
  \begin{equation}\label{control-derivative}
 \frac{2}{3}\Vert\partial_x v\Vert_{L^2}^2 -C \Vert v\Vert_{L^2}^{\fractext{10}{3}}\leq I(v)\leq \frac{4}{3}\Vert\partial_x v\Vert_{L^2}^2+C\Vert v\Vert_{L^2}^{\fractext{10}{3}}
   \period
  \end{equation}
Using the left inequality, we get
  \begin{align}
   \dbE[I(u(t))]
    &\leq e^{-\lambda t/2}\dbE[I(u_0)]+ C\int_0^t e^{-\lambda (t-s)/2 } \left(1+\dbE[\Vert u_0\Vert_{L^2}^4]\right)\,ds 
    \nonumber\\&
    \leq C(\dbE[I(u_0)]+\dbE[\Vert u_0\Vert_{L^2}^4]+1)
   \period
  \end{align}
By the second part of \eqref{control-derivative}  we
obtain 
  \begin{equation}
    \sup_{t\geq 0 }\dbE[\Vert\partial_x u(t)\Vert_{L^2}^2]\leq C(\dbE\left[I(u_0)+\Vert u_0\Vert_{L^2}^4\right]+1)
   \period
  \end{equation}
Finally, combining all the inequalities above we conclude
  \begin{equation}\label{uniform-bounds-H1}
 \sup_{t\geq 0} \dbE[\Vert u(t)\Vert^2_{H^1}]\leq C(\dbE\left[\Vert u_0\Vert^2_{H^1}+\Vert u_0\Vert_{L^2}^4\right]+1)
  \end{equation}
which gives \eqref{EQ09}.

In order to obtain \eqref{EQ09} for $k=2$, we apply Ito's Lemma to $I^2(u(t))$ and get
 \begin{align}
    &
     dI^2(u(t))+4\lambda I^2(u(t)) dt 
    \nonumber\\&\indeq
     =2I(u(t))\alpha(t) dt +d\tilde M_t
     +\sum_i\Bigl(2(\partial_x u(t), \partial_x \Phi e_i)-(u^2(t),\Phi e_i)
            \Bigr)^2 dt,
  \end{align}
where 
\begin{align*}
              d\tilde M_t=2 I(t)\sum_i \left\{2(\partial_x u(t), \partial_x \Phi e_i)-(u^2(t),\Phi e_i)\right\}dB^i_t\period
     \end{align*}
Similarly to the previous case, we have
$$2I(u(t))\alpha(t)\leq2  \lambda I^2(u(t))+C(1+ \Vert \partial_x u(t)\Vert^2_{L^2}+ \Vert u(t)\Vert_{L^2}^{20/3}).$$
We also estimate the quadratic variation term as
  \begin{align}
    &\sum_i\Bigl(2(\partial_x u(t), \partial_x \Phi e_i)-(u^2(t),\Phi e_i)
      \Bigr)^2 
    \nonumber\\&\indeq
     \leq 8\Vert \partial_x u(t)\Vert^2_{L^2}\Vert\partial_x \Phi\Vert^2_{{\rm HS}(L^2,L^2)}+ 2 \Vert u\Vert^4_{L^4} \Vert \Phi\Vert^2_{{\rm HS}(L^2,L^2)}
    \nonumber\\&\indeq
   \leq C( \Vert \partial_x u(t)\Vert^2_{L^2}+ \Vert u\Vert^6_{L^2} )
  \end{align}
where the right hand side is bounded in expectation. 

Finally, we compute the quadratic variation of $\tilde M$, 
\begin{align*}
d\langle \tilde M\rangle_t&=4I^2(u(t))\sum_i\left\{2(\partial_x u(t), \partial_x \Phi e_i)-(u^2(t),\Phi e_i)\right\}^2dt\\
&\leq C I^2(u(t))( \Vert \partial_x u(t)\Vert^2_{L^2}+  \Vert u(t)\Vert^4_{L^4})dt
\nonumber\\&
\leq CI^2(u(t))( \Vert \partial_x u(t)\Vert^2_{L^2}+ \Vert u\Vert^6_{L^2} ) .
\end{align*}
where we used \eqref{EQ01} and the $\epsilon$-Young inequality. For all $n\in{\mathbb N}$, we define the stopping time 
  \begin{equation}
   \tau_n=\inf\left\{t\geq 0 :\int_0^t I^2(u(s))ds\geq n
          \right\}   
   \label{EQ57}
  \end{equation}
 and 
  \begin{equation}
   \tau^*
    =\lim_n \tau_n
   \period   
   \label{EQ58}
  \end{equation}
Integrating the evolution of $I^2(u(t))$ we obtain for all $T>0$
  \begin{align*}
    &\sup_{t\in [0,T\wedge \tau_n]} I^2(u(t))-I^2(u(0))+4\lambda \int_0^t I^2(u(s))ds
    \nonumber\\&\indeq
     \leq \sup_{t\in [0,T\wedge \tau_n]} \tilde M_t +2\lambda \int_0^{T\wedge \tau_n} I^2(u(s))ds 
+C\int_0^{T\wedge \tau_n}\left( \Vert \partial_x u(s)\Vert^2_{L^2}+  \Vert u(s)\Vert_{L^2}^{20/3} \right) ds
\end{align*}
We now take the expectation and use the  Burkholder-Davis-Gundy inequality to obtain 
\begin{align*}
&\dbE\left[\sup_{t\in [0,{T\wedge \tau_n}]} I^2(u(t))-I^2(u(0))+4\lambda \int_0^t I^2(u(s))ds\right]
\nonumber\\&\indeq
 \leq 2\lambda\dbE\left[ \int_0^{{T\wedge \tau_n}}  I^2(u(s))ds \right]
+ C\dbE\left[\left(\int_0^{T\wedge \tau_n} I^2(u(t))(\Vert \partial_x u(t)\Vert^2_{L^2}+1) dt\right)^{1/2}\right] 
\nonumber\\&\indeq\indeq
+C\dbE\left[\int_0^{T\wedge \tau_n} \Vert \partial_x u(s)\Vert^2_{L^2}+  \Vert u(s)\Vert_{L^2}^{20/3} ds\right],
\end{align*}
We can use the $\epsilon$-Young inequality to have
$$\dbE\left[\left(\int_0^{T\wedge \tau_n} I^2(u(t))(\Vert \partial_x u(t)\Vert^2_{L^2}+1) dt\right)^{1/2}\right]\leq \lambda 
\dbE\left[\int_0^{T\wedge \tau_n}  I^2(u(s))ds\right] +C\dbE\left[\sup_{t\in[0,T]}\Vert\partial_x u(t)\Vert^2_{L^2}\right].$$
Using this inequality in the previous estimate, we obtain that for all $T>0$ we have
  \begin{equation}
     \dbE\left[\int_0^{T\wedge \tau_n}I^2(u(s))ds\right]<C   
   \label{EQ59}
  \end{equation}
for a constant independent of $n$ which implies that  $\tilde M$ is a martingale. 
Therefore,
\begin{align*}
 \dbE\left[I^2(u(t))\right]+2\lambda \int_0^t \dbE\left[I^2(u(s))\right]ds\leq&\dbE[I^2(u(0))]+ C\int_0^t \dbE\left[\Vert \partial_x u(s)\Vert^2_{L^2}+  \Vert u(s)\Vert_{L^2}^{20/3}\right] ds,
\end{align*}
where the function $s\mapsto  \dbE\left[\Vert \partial_x u(s)\Vert^2_{L^2}+  \Vert u(s)\Vert_{L^2}^{20/3}\right] $ is bounded. Proceeding as above, we obtain  $$\sup_{t\geq 0} \dbE\left[I^2(u(t))\right]<C\dbE\left[\Vert \partial_x u(0)\Vert^4+ \Vert u(0)\Vert_2^{20/3}+1\right]$$
and the lemma is established.
\end{proof}

In order to prove the Feller property,
we need the following improvement of our estimates.

\coe
\begin{Lemma}\label{finite-time-H1}
For all $R_0, T>0$ there exists a constant $C(R_0,T)$ such that 
we have
  \begin{align}
 &\dbE\left[\sup_{t\in[0,T]}\Vert u(t)\Vert^2_{H^1}\right]\leq C(R_0,T) 
  \end{align}
for all initial conditions $u_0\in H^1(\dbR)$ verifying $\Vert u_0\Vert_{H^1}\leq R_0$.
\end{Lemma}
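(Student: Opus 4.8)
The plan is to reduce the statement to a bound on $\dbE\bigl[\sup_{t\in[0,T]}|I(u(t))|\bigr]$ and then to obtain that bound by integrating the identity \eqref{evolution-I}. First I would observe that since $u_0$ is deterministic with $\Vert u_0\Vert_{H^1}\le R_0$, all its $L^2$-moments are finite, so Lemma~\ref{main-bounds} applies for every $k$; in particular $\sup_{t\ge0}\dbE[\Vert u(t)\Vert_{L^2}^{2k}]\le C(R_0)$, and the argument in the proof of that lemma in fact gives $\dbE[\sup_{t\in[0,T]}\Vert u(t)\Vert_{L^2}^{2k}]\le C(R_0,T)$ for every $k$. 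Moreover, \eqref{EQ09} with $k=1$ gives $\sup_{t\ge0}\dbE[\Vert\partial_x u(t)\Vert_{L^2}^2]\le C(R_0)$. By the left inequality in \eqref{control-derivative} one has $\Vert\partial_x u(t)\Vert_{L^2}^2\le \frac{3}{2}|I(u(t))|+C\Vert u(t)\Vert_{L^2}^{10/3}$, so once $\dbE[\sup_{t\in[0,T]}|I(u(t))|]\le C(R_0,T)$ is established, adding the (already available) bound for $\dbE[\sup_{t\in[0,T]}\Vert u(t)\Vert_{L^2}^2]$ yields $\dbE[\sup_{t\in[0,T]}\Vert u(t)\Vert_{H^1}^2]\le C(R_0,T)$.

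To bound $\dbE[\sup_{t\in[0,T]}|I(u(t))|]$, I would integrate \eqref{evolution-I} in time to write
\[
I(u(t))=I(u_0)-2\lambda\int_0^t I(u(s))\,ds+\int_0^t\alpha(s)\,ds+M'_t,\qquad M'_t=\int_0^t\bigl(2(\partial_x u(s),\partial_x\Phi\,dW(s))-(u^2(s),\Phi\,dW_s)\bigr),
\]
where $M'$ is a martingale by the estimates already used in the proof of Lemma~\ref{lemma-invariant-I} (to be fully rigorous one may localize with $\tau_n=\inf\{t:\Vert\partial_x u(t)\Vert_{L^2}\ge n\}$, carry out the estimates on $[0,T\wedge\tau_n]$, and pass to the limit using $u\in C([0,\infty);H^1(\dbR))$ a.s.). Taking the supremum over $[0,T]$, then expectations, and estimating term by term: $|I(u_0)|\le \frac{4}{3}\Vert\partial_x u_0\Vert_{L^2}^2+C\Vert u_0\Vert_{L^2}^{10/3}\le C(R_0)$ by \eqref{control-derivative}; $\int_0^T\dbE[|I(u(s))|]\,ds\le C(R_0)T$ by Fubini together with the uniform bounds on $\dbE[\Vert\partial_x u(s)\Vert_{L^2}^2]$ and $\dbE[\Vert u(s)\Vert_{L^2}^{10/3}]$ recalled above; $\int_0^T\dbE[|\alpha(s)|]\,ds\le C(R_0)T$ using the $\epsilon$-Young form $|\alpha(s)|\le\lambda\Vert\partial_x u(s)\Vert_{L^2}^2+C(\Vert u(s)\Vert_{L^2}^4+1)$ following \eqref{estimate_alpha}; and, by the Burkholder--Davis--Gundy inequality together with the quadratic-variation bounds \eqref{quad-var-H1}--\eqref{EQ14} (so that $d\langle M'\rangle_t\le C(\Vert\partial_x u(t)\Vert_{L^2}^2+\Vert u(t)\Vert_{L^2}^4)\,dt$) and Jensen's inequality, $\dbE[\sup_{t\in[0,T]}|M'_t|]\le C\dbE[\langle M'\rangle_T^{1/2}]\le C\bigl(\int_0^T\dbE[\Vert\partial_x u(s)\Vert_{L^2}^2+\Vert u(s)\Vert_{L^2}^4]\,ds\bigr)^{1/2}\le C(R_0,T)$. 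Summing the four contributions gives $\dbE[\sup_{t\in[0,T]}|I(u(t))|]\le C(R_0,T)$, which completes the argument.

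I expect the only genuinely delicate point to be the treatment of the dissipative term $-2\lambda\int_0^t I(u(s))\,ds$: after passing to absolute values it becomes $+2\lambda\int_0^t|I(u(s))|\,ds$, which at first sight seems to call for a Gronwall-type estimate, but in fact none is needed, because Lemma~\ref{lemma-invariant-I} already supplies the uniform-in-time bound $\dbE[|I(u(s))|]\le C(R_0)$ (valid since $u_0$ is deterministic with finite $H^1$ norm), so this term is controlled directly by Fubini. A secondary point to get right is the martingale property of $M'$ and the legitimacy of exchanging the supremum with the expectation, both of which are handled by the localization remark above; everything else is a routine combination of the moment bounds from Lemmas~\ref{main-bounds} and \ref{lemma-invariant-I} with the BDG and $\epsilon$-Young inequalities.
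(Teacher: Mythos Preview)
Your proposal is correct and follows essentially the same approach as the paper: integrate the identity \eqref{evolution-I}, take the supremum over $[0,T]$ and expectations, bound the drift terms via Fubini using the uniform-in-time estimates from Lemmas~\ref{main-bounds} and~\ref{lemma-invariant-I}, and control the martingale term by the BDG inequality with the quadratic-variation bounds \eqref{quad-var-H1}--\eqref{EQ14}. Your write-up is in fact somewhat more explicit than the paper's own proof in spelling out the reduction from the $|I|$-bound to the $H^1$-bound via \eqref{control-derivative} and in flagging the localization needed to justify the martingale property.
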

\cob

\begin{proof}
We write \eqref{evolution-I} in the form
  \begin{align}
   I(u(t))&=I(u_0) +M_t-2\lambda \int_0^t\ I(u(s))  + \Vert\partial_x \Phi\Vert_{{\rm HS}(L^2,L^2)}^2ds
     \nonumber\\&\indeq
	+\int_0^t \biggr(\frac{\lambda}{ 3} \int u^3(s,x) \,dx
      -\sum\int_\dbR u(s,x) |(\Phi e_i)(x)|^2 \,dx
   \nonumber\\&\indeq\indeq\indeq\indeq\indeq\indeq\indeq
     +2(\partial_x u(s),\partial_x f)-(u^2(s),f)\biggr)\,ds
  \end{align}
where $M_t$ is the martingale term.
Then 
  \begin{align}
  & \dbE\left[\sup_{0\leq t\leq T} |I(u(t))| \right]
    \nonumber\\&\indeq
    \leq \dbE|I(u_0)| 
      +C \int_0^T \dbE\left[|I(u(s))|\right]\,ds 
   \nonumber\\&\indeq\indeq
   +C\int_0^T \dbE\left[ |u^3(s,x)| \,dx
   + \Vert\partial_x \Phi\Vert_{{\rm HS}(L^2,L^2)}^2 +\sum\int_\dbR |u(s,x)| |(\Phi e_i)(x)|^2 \,dx \right]\,ds
   \nonumber\\&\indeq\indeq
   +\dbE\left[\sup_{0\leq t\leq T}| M_t|\right]
  \nonumber\\&\indeq
  \leq\dbE\left[ | I(u_0)|+ C T (\Vert u_0\Vert^2_{H^1}+\Vert u_0\Vert_{L^2}^4+1)\right] +\dbE\left[\sup_{0\leq t\leq T} | M_t|\right]
   \period
  \end{align}
The Burkholder-Davis-Gundy inequality, together with
\eqref{quad-var-H1} and \eqref{EQ14}, gives the required bound for $\dbE\left[\sup_{0\leq t\leq T} | M_t|\right]$. 
\end{proof}

\nnewpage
\subsection{Proof of the Feller property}
\label{secfeller}
For all $T\geq 0$ we, as in \cite{DD1}, define
  \begin{align}
    X_1(T)
    &=\Bigl\{
      u\in C(0,T; H^{1}({\mathbb R}))\cap L^2({\mathbb R};L^{\infty}([0,T])):
    \nonumber\\&\indeq\indeq
        \nabla\partial_{x}u\in L^{\infty}({\mathbb R},L^2([0,T])),
         \partial_x u\in L^{4}([0,T]; L^{\infty}({\mathbb R}))
     \Bigr\}
   \label{EQ08}
  \end{align}
and denote
  \begin{equation}
    \overline u_T
     =\int_0^T U_\lambda(T-s)\Phi \,dW_s   
    \period    
   \label{EQ11}
  \end{equation}
For a stopping time $\tau$ we define a shifted process by
  \begin{align}\label{defn-shifted}
    \overline u^\tau_T&=0 \mbox{~~for }T\leq \tau
  \end{align}
and
  \begin{align}
  \overline u^\tau_T&=\int_\tau^T U_\lambda(T-s)\Phi \,dW_s 
   \comma T\geq \tau
   \period
  \end{align}

We start with the following auxiliary result.

\coe
\begin{Lemma}[\cite{DD1}]
\label{continuity-ubar}
Under the assumption
\eqref{EQ00c}, $\Vert\overline u\Vert_{X_1(s)}$ is a continuous process. 
\end{Lemma}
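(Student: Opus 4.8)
\textbf{Proof plan for Lemma~\ref{continuity-ubar}.}

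The plan is to prove that $s\mapsto \Vert\overline u\Vert_{X_1(s)}$ is continuous by combining two facts: that $\overline u$ as given by \eqref{EQ11} is almost surely a well-defined element of $X_1(T)$ for every $T>0$ (which is part of the existence theory in \cite{DD1} under the assumption \eqref{EQ00c}, since $\overline u$ is the stochastic convolution, i.e.\ the mild solution of the linear equation with zero initial datum and $f=0$), and that the map $s\mapsto \Vert\overline u\Vert_{X_1(s)}$ is monotone nondecreasing because each of the constituent norms in the definition \eqref{EQ08} of $X_1(s)$ is an integral or supremum over the time interval $[0,s]$, hence nondecreasing in $s$. Monotonicity reduces continuity to the absence of jumps, so it suffices to show left- and right-continuity at each fixed $s_0>0$.

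First I would recall, following \cite{DD1}, that the linear propagator $U_\lambda(t)=e^{-\lambda t}U_0(t)$ enjoys the smoothing and Strichartz-type estimates that make $\overline u$ lie in $X_1(T)$; in particular each of the four component quantities---$\Vert\overline u\Vert_{C([0,s];H^1)}$, $\Vert\overline u\Vert_{L^2(\dbR;L^\infty([0,s]))}$, $\Vert\partial_x^2\overline u\Vert_{L^\infty(\dbR;L^2([0,s]))}$, and $\Vert\partial_x\overline u\Vert_{L^4([0,s];L^\infty(\dbR))}$---is almost surely finite for all $s$, and each is visibly nondecreasing in $s$. The $X_1(s)$ norm being (a fixed combination of) these, it is nondecreasing. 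Next, for right-continuity at $s_0$, I would use that for the three ``integral'' components (the $L^2$-, $L^4$-, and $L^\infty L^2$-type pieces) the increment over $[s_0,s_0+h]$ tends to $0$ as $h\downarrow 0$ by dominated convergence, using the almost-sure finiteness over a slightly larger interval $[0,s_0+1]$ as the dominating bound; for the $C([0,s];H^1)$ component, right-continuity is immediate because $\sup_{[0,s_0+h]}=\max(\sup_{[0,s_0]},\sup_{[s_0,s_0+h]})$ and the latter converges to $\Vert\overline u(s_0)\Vert_{H^1}\le\sup_{[0,s_0]}$ by path-continuity of $\overline u$ in $H^1$ (which is the $C([0,\infty);H^1)$ membership from Theorem~\ref{existence-solution}, specialized to the linear stochastic convolution). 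For left-continuity the same arguments apply, using $\lim_{h\downarrow 0}\sup_{[0,s_0-h]}\Vert\overline u(t)\Vert_{H^1}=\sup_{[0,s_0]}\Vert\overline u(t)\Vert_{H^1}$, again from path-continuity, and dominated convergence for the integral pieces.

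The main obstacle is the $L^\infty(\dbR)$-in-space norms hidden inside the $X_1$ norm: the components $\Vert\overline u\Vert_{L^2(\dbR;L^\infty([0,s]))}$ and $\Vert\partial_x^2\overline u\Vert_{L^\infty(\dbR;L^2([0,s]))}$ involve a spatial essential supremum (or a spatial $L^2$ integral of a temporal sup), and one must check that the dominated-convergence argument in the time variable survives after these spatial operations. This is handled by noting that the relevant quantities are, by the de~Bouard--Debussche estimates, dominated by finite random variables uniformly in the time endpoint over $[0,s_0+1]$, so one may apply dominated convergence to the scalar-valued functions $s\mapsto\Vert\overline u\Vert_{X_1(s)}^{(j)}$ directly rather than needing to interchange limits with the spatial integrals; monotonicity then upgrades pointwise convergence of these scalar functions to genuine continuity. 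Since a finite positive combination of continuous nondecreasing functions is continuous, $\Vert\overline u\Vert_{X_1(s)}$ is a continuous process, completing the proof. \endproof
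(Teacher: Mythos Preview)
Your monotonicity-plus-dominated-convergence strategy is sound for three of the four pieces of the $X_1$ norm, but there is a genuine gap at the $L^\infty(\dbR;L^2([0,s]))$ component for $\partial_x^2\overline u$. Writing $g(s)=\sup_{x\in\dbR}\int_0^s|\partial_x^2\overline u(t,x)|^2\,dt$, right-continuity at $s_0$ requires $\sup_x\int_{s_0}^{s_0+h}|\partial_x^2\overline u|^2\,dt\to 0$, and this does \emph{not} follow from mere finiteness of the $L^\infty_x L^2_{t\in[0,s_0+1]}$ norm: a supremum of a family of continuous nondecreasing functions can jump (take $f_n(s)=\min(1,n(s-s_0)_+)$, whose supremum over $n$ jumps at $s_0$). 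Your proposed resolution---``apply dominated convergence to the scalar-valued functions $s\mapsto\Vert\overline u\Vert_{X_1(s)}^{(j)}$'' and ``monotonicity then upgrades pointwise convergence to genuine continuity''---is not an argument: there is no integral in sight for the dominated convergence theorem to act on, and bounded monotone real-valued functions can certainly have jumps.

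The paper treats this component differently. After reducing (via linearity of the equation for $\overline u$) to showing $\Vert\overline u\Vert_{X_1(s)}\to0$ as $s\to0$, it handles the three ``easy'' pieces much as you do, but for the $L^\infty_x L^2_t$ piece it invokes the quantitative small-time estimate (a modification of \cite[Proposition~3.3]{DD1})
\[
\dbE\Bigl[\sup_{x\in\dbR}\int_0^s|\partial_x^2\overline u(r,x)|^2\,dr\Bigr]\le C(\lambda,s)\,\Vert\Phi\Vert_{{\rm HS}(L^2,H^3)}^2,\qquad C(\lambda,s)\to0\ \text{as}\ s\to0,
\]
which crucially uses the $H^3$ regularity of $\Phi$ in \eqref{EQ00c}, not just $H^1$. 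If you prefer to stay within your direct approach, the same extra regularity rescues the argument: since $\Phi\in{\rm HS}(L^2,H^{3+})$, the stochastic convolution satisfies $\overline u\in C([0,T];H^{3+})$, hence $\partial_x^2\overline u\in C([0,T];H^{1+})\hookrightarrow C([0,T];L^\infty(\dbR))$, and then
\[
\sup_{x}\int_{s_0}^{s_0+h}|\partial_x^2\overline u(t,x)|^2\,dt\le\int_{s_0}^{s_0+h}\Vert\partial_x^2\overline u(t)\Vert_{L^\infty}^2\,dt\to0.
\]
Either way, the missing ingredient is an actual use of assumption \eqref{EQ00c} beyond what is needed merely to place $\overline u$ in $X_1(T)$.
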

\cob

\begin{proof}
Since
$\overline u$ is a solution of a linear equation,
it is sufficient to prove that
  \begin{equation}
   \Vert\overline u\Vert_{X_1(s)} \to 0
   \label{EQ20}
   \comma \dbP\mbox{-a.s. }
  \end{equation}
as $s\to 0$. Now,
${X_1(s)}$ is defined 
in \eqref{EQ08}
as the intersection of four spaces 
and thus
we need to show convergence to $0$ for all four norms. 

Note that $\Vert\overline u(s)\Vert_{H^1}$, for $s\in[0,T]$ is a continuous uniformly integrable 
semi-martingale 
and thus 
  \begin{equation}
    \dbE\left[\sup_{0\leq r\leq s} \Vert\overline u(r)\Vert_{H^1}\right]\to 0   
    \mbox{~~as~}s\to0
   \period
   \label{EQ21}
  \end{equation}
For the norms associated to $ L^2({\mathbb R};L^{\infty}([0,T]))$ and
$ L^{4}([0,T]; L^{\infty}({\mathbb R}))$, the convergence follows by
the monotone convergence theorem. The only issue is for the
$L^{\infty}({\mathbb R},L^2([0,T]))$ norm. 
In order to show convergence, we modify the proof of 
\cite[Proposition~3.3]{DD1} to obtain
  \begin{equation}
  \dbE\left[\sup_{x\in \dbR} \int_0^s |D\partial_x \overline u(r) |^2 dr\right]\leq C(\lambda, s)\Vert\Phi\Vert^2_{{\rm HS}(L^2, H^{3})}   
   \label{EQ22}
  \end{equation}
where $C(\lambda, s)\to 0$ as $s\to 0$, which completes the proof.
\end{proof}

We 
now return to the proof of the Feller property. Fix $u_0\in H^{1}({\mathbb R})$ and $t>0$.
Also, let $\xi\in C_b(H^1,\dbR)$ and $\epsilon>0$.
Denote
$R_0=\Vert u_0\Vert_{H^1}+1$
and 
  \begin{equation}
   M=\sup_{v\in H^1(\dbR)}|\xi(v)|   
   \period
   \label{EQ28}
  \end{equation}

{\it Step 1}: For all $v_0$ such that $\Vert v_0-u_0\Vert_{H^1}\leq 1$
  with the associated solution $v$ of \eqref{kdv-intro}, Lemma~\ref{finite-time-H1} gives
  \begin{align}
    & \dbP\left(
            \max\left\{
                \sup_{s\in[0,t]}\Vert u(s)\Vert^2_{H^1},
                \sup_{s\in[0,t]}\Vert v(s)\Vert^2_{H^1}\right\}\geq R\right)
    \nonumber\\&\indeq
     \leq\frac{1}{R} \dbE\left[\sup_{s\in[0,t]}\Vert u(s)\Vert^2_{H^1}+\sup_{s\in[0,t]}\Vert v(s)\Vert^2_{H^1}\right]
   \nonumber\\&\indeq
 \leq\frac{C(R_0)}{R}
   \period
  \end{align}
(Note that $t>0$ is fixed and thus the dependence of all constants
on $t$ is not indicated.)
Fix $R>0$ so that the last term verifies 
  \begin{equation}
    \frac{C(R_0)}{R}\leq\frac{\epsilon}{6 M}   
   \period
   \label{EQ12}
  \end{equation}

{\it Step 2}:  Using results in \cite{DD1}, we obtain
a non-decreasing function $\CC$ 
such that we have the inequalities
  \begin{align}
    &\dbE\Vert\overline u\Vert_{X_1(T)}+ \left\Vert\int_0^\cdot U_\lambda (\cdot-r)f \,dr\right\Vert_{X_1(T)} 
      \leq \CC(T) 
  \end{align}
with
  \begin{align}
    &\left\Vert\int_0^\cdot U_\lambda (\cdot-s)h(s)\partial_x g(s) \,ds\right\Vert_{X_1 (T)}
      \leq \CC(T)T^{1/2}
       \Vert h\Vert_{X_1(T)}
       \Vert g\Vert_{X_1(T)}
  \end{align}
for all $h,g\in X_1(T)$  and
  \begin{align}
    \Vert U_\lambda(\cdot) v_0\Vert_{X_1(T)}\leq \CC(T) \Vert v_0\Vert_{H^1}
  \end{align}
for $v_0\in H^{1}({\mathbb R})$.
Since $\CC$ is non-decreasing, we may increase it so that it is
also continuous.
For all $s\leq t$ the solutions $u$ and $v$ verify
  \begin{align}
   u(s)&={\cal T}_{u_0} (u)(s)
  \end{align}
and
  \begin{align}
   v(s)&={\cal T}_{v_0} (v)(s)
  \end{align}
where 
  \begin{equation}
    {\cal T}_{h} (g)(s)
      =U_\lambda(s)h
           - \int_0^s U_\lambda(s-r) g(r)\partial_x g(r)\,dr 
	+ \int_0^s U_\lambda(s-r) f \,dr 
           + \overline u(s)
   \label{EQ15}
  \end{equation}
for $h\in H^1(\dbR)$ and $g\in X_1(t)$.
Define 
  \begin{equation}
    \tau
       =\inf
            \left\{s\geq 0: 8\CC(t)s^{1/2}
              \left(\CC(t)R
                       +\Vert\overline u\Vert_{X_1(s)}
		+ \left\Vert\int_0^\cdot U_\lambda (\cdot-r)f \,dr\right\Vert_{X_1(s)} 
              \right)>1
            \right\}
   \period
   \label{EQ16}
  \end{equation}
By Lemma~\ref{continuity-ubar}, 
$\tau$ is a stopping time. Note  that 
  \begin{align}
 \dbP(\tau< s)&\leq \dbP\left( 8\CC(t)s^{1/2}\left(\CC(t)R+\Vert\overline u\Vert_{X_1(s)}+ \left\Vert\int_0^\cdot U_\lambda (\cdot-r)f \,dr\right\Vert_{X_1(s)} \right)>1\right)
   \nonumber\\&
  \leq \dbE\left[ 8\CC(t)s^{1/2}\left(\CC(t)R+\Vert\overline u\Vert_{X_1(s)}+ \left\Vert\int_0^\cdot U_\lambda (\cdot-r)f \,dr\right\Vert_{X_1(s)} \right)\right]
   \nonumber\\&
    \leq  \left(8\CC^2(t)s^{1/2}(R+1)\right)
 \nonumber\\&
  \leq C(R)s^{1/2}
  \end{align}
and thus
  \begin{align}
    \dbE[\tau]
      &=\int_0^{\infty} \dbP(s\leq \tau) \,ds =\int_0^{\infty}\bigl(1-  \dbP(s> \tau) \bigr)\,ds 
   \nonumber\\ &
   \geq  
       \int_0^{\fractext{1}{C(R)^2}}\bigl( 1-  1\wedge (C(R)s^{1/2})\bigr)\,ds 
   \nonumber\\&
   \geq \frac{1}{3C(R)^2}
   \label{EQ17}
  \end{align}
where the dependence on $t$ is understood.

{\it Step 3}:  We now inductively define a sequence of stopping times.
We start with
  \begin{align}\label{tau-def}
 &\tau_0=\tau
  \end{align}
and then for $k=0,1,\ldots$ let
  \begin{align}
    \tau_{k+1}=\inf
        \left\{s\geq \tau_k: 8\CC(t)(s-\tau_k)^{1/2} 
         \left (\CC(t)R+\Vert\overline u_s^{\tau_k}\Vert_{X_1(\tau_k,s)}+ \left\Vert\int_{\tau_k}^\cdot U_\lambda (\cdot-r)f \,dr\right\Vert_{X_1(\tau_k,s)} \right)>1 \right\}
  \end{align}
where
  \begin{align}
   \overline u^{\tau_k}_s=\int_{\tau_k}^s U_\lambda(s-r)\Phi \,dW_r
  \end{align}
and $X_1(\tau_k,s)$ is defined similarly to $X_1(T)$ for shifted process(defined on $[\tau_k,s]$). For simplicity of notation, set $\tau_{-1}=0$
 
Note that $\overline u^{\tau_k}_s=0$ for $s\leq \tau_k$.
For $s\geq \tau_k$, we have that
$\overline u^{\tau_k}_s$ is $\sigma (W_r-W_{\tau_k},r\in[\tau_k,s])$-measurable. 
Therefore, 
$\tau_{k+1}-\tau_k$ is independent from ${\mathbb G}_{\tau_k}$ and has the same distribution as $\tau$. 

By the law of large numbers, a.s.
  \begin{equation}\frac{\tau_n}{n}=\frac{1}{n} 
      \sum_{i=0}^n (\tau_i-\tau_{i-1}) \to \dbE[\tau]\geq  \frac{1}{3C^2(R)}\end{equation}
as $n\to\infty$,
where the constant is the same as in \eqref{EQ17}. 
Thus the sequence of random variables 
\begin{equation}{1}_{\{ \tau_n\leq t\}}= {1}_{\{\fractext{\tau_n}{n}- \dbE[\tau]\leq \fractext{t}{n}-\dbE[\tau]\}}\end{equation} 
converges $\dbP$-a.s. to $0$ as $n\to\infty$.
By the dominated convergence theorem $\dbP( \tau_n\leq t)\to 0$ as $n\to\infty$. Hence, there exists $n>0$ depending only on 
$(R,\epsilon, M)$ such that  
  \begin{equation}
    \dbP(\tau_n\leq t)\leq \frac{\epsilon}{6M}
   \period
   \label{EQ18}
  \end{equation}
Therefore, for all $v_0$ satisfying $\Vert u_0-v_0\Vert_{H^1}\leq 1$,
we have
  \begin{align}
   & \dbE\left[|\xi(u(t))-\xi(v(t))|\right] 
    \nonumber\\&\indeq
\leq \dbE\left[[|\xi(u_t)-\xi(v_t)|1_{\max\{\sup_{s\in[0,t]} \Vert u(s)\Vert^2_{H^1} ,\sup_{s\in[0,t]} \Vert v(s)\Vert^2_{H^1} \}\geq R }\right]
   \nonumber\\&\indeq\indeq
    +\dbE\left[[|\xi(u_t)-\xi(v_t)|1_{\{\max\{\sup_{s\in[0,t]} \Vert u(s)\Vert^2_{H^1} ,\sup_{s\in[0,t]} \Vert v(s)\Vert^2_{H^1} \}\leq R\} }1_{\{\tau_n\leq t\}}\right]
   \nonumber\\&\indeq\indeq
     +\dbE\left[[|\xi(u_t)-\xi(v_t)|1_{\{\max\{\sup_{s\in[0,t]} \Vert u(s)\Vert^2_{H^1} ,\sup_{s\in[0,t]} \Vert v(s)\Vert^2_{H^1} \}\leq R\}} 1_{\{\tau_n\geq t\}}\right]
  \nonumber\\&\indeq
 = T_1+T_2
   +\dbE\left[[|\xi(u_t)-\xi(v_t)|1_{\{\max\{\sup_{s\in[0,t]} \Vert u(s)\Vert^2_{H^1} ,\sup_{s\in[0,t]} \Vert v(s)\Vert^2_{H^1} \}\leq R\}} 1_{\{\tau_n\geq t\}}\right]
  \end{align}
Note that by the choice of $R$, we have
$T_1\leq \fractext{\epsilon}{3}$. 
Similarly, by \eqref{EQ18}, $T_2\leq\fractext{\epsilon}{3}$. Thus for all $v_0$ such that $\Vert u_0-v_0\Vert_{H^1}\leq 1$, we have
  \begin{align}\label{control-feller}
   &\dbE\left[|\xi(u(t))-\xi(v(t))|\right] 
   \nonumber\\&\indeq
   \leq \frac{2\epsilon}{3}+\dbE\left[[|\xi(u_t)-\xi(v_t)|1_{\{\max\{\sup_{s\in[0,t]} \Vert u(s)\Vert^2_{H^1} ,\sup_{s\in[0,t]} \Vert v(s)\Vert^2_{H^1} \}\leq R\}} 1_{\{\tau_n\geq t\}}\right]
   \period
  \end{align}

\nnewpage
In order to continue our analysis, we need the following lemma.

\coe
\begin{Lemma}\label{small-tau}
Denote the event
  \begin{equation}
    A=\left\{
          \max\left\{\sup_{s\in[0,t]} \Vert u(s)\Vert^2_{H^1} ,\sup_{s\in[0,t]}
      \Vert v(s)\Vert^2_{H^1} 
              \right\}\leq R
      \right\}
   \period
   \label{EQ13}
  \end{equation}
Then for all $k\in{\mathbb N}_0$ we have
the inequality 
  \begin{align}
   {\color{blue}\sup_{s\in[0,t]}} \Vert u(t)-v(t)\Vert_{H^1}\leq (2\tilde C(t))^{k+1}\Vert u_0-v_0\Vert_{H^1}
  \end{align}
on the event $A\cap \{\tau_k\geq t\}$.
\end{Lemma}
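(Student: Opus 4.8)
The plan is to iterate the $X_1$-contraction argument underlying the local well-posedness on the successive random intervals $[\tau_{-1},\tau_0]=[0,\tau]$, $[\tau_0,\tau_1],\ldots$, and to verify that across each of them the $H^1$-distance of the two solutions is amplified by a factor at most $2\tilde C(t)$; since on $A\cap\{\tau_k\geq t\}$ the time $t$ lies in the union of the first $k+1$ intervals, this produces the power $k+1$. Throughout, enlarging $\tilde C$, we may assume $\tilde C$ continuous, non-decreasing and $\geq 1$, so all constants can be frozen at the fixed time $t$; we also use $R\geq 1$, so that $\Vert w_0\Vert_{H^1}^2\leq R$ forces $\Vert w_0\Vert_{H^1}\leq R$.

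The core is the following one-interval estimate. Fix $j\geq 0$, work on the event $A$, and suppose $\tau_{j-1}<t$ (with $\tau_{-1}:=0$, so for $j=0$ this is just the original equation on $[0,\tau\wedge t]$). By the semigroup property of $U_\lambda$ and the fact that $\tau_{j-1}$ is a stopping time, $u$ and $v$ restricted to $I_j:=[\tau_{j-1},\tau_j\wedge t]$ solve the shifted form of \eqref{EQ15} with initial data $u(\tau_{j-1})$, $v(\tau_{j-1})$, forcing $\int_{\tau_{j-1}}^{\cdot} U_\lambda(\cdot-r)f\,dr$, and stochastic term $\overline u^{\tau_{j-1}}$; by time-translation the $X_1$ mapping estimates of Step~2 hold on $I_j$ with constant $\tilde C(|I_j|)\leq\tilde C(t)$. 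Setting $D_j=\tilde C(t)R+\Vert\overline u^{\tau_{j-1}}\Vert_{X_1(I_j)}+\bigl\Vert\int_{\tau_{j-1}}^{\cdot}U_\lambda(\cdot-r)f\,dr\bigr\Vert_{X_1(I_j)}$, the definition of $\tau_j$ yields $8\tilde C(t)|I_j|^{1/2}D_j\leq 1$, and since $\Vert u(\tau_{j-1})\Vert_{H^1}\leq\sqrt R\leq R$ on $A$, a standard continuity/bootstrap argument with those estimates gives $\Vert u\Vert_{X_1(I_j)},\Vert v\Vert_{X_1(I_j)}\leq 2D_j$. Subtracting the two Duhamel formulas, the forcing term and the stochastic convolution cancel; writing $u\partial_x u-v\partial_x v=u\partial_x(u-v)+(u-v)\partial_x v$ and applying the bilinear estimate of Step~2 twice gives
\[ \Vert u-v\Vert_{X_1(I_j)}\leq\tilde C(t)\Vert u(\tau_{j-1})-v(\tau_{j-1})\Vert_{H^1}+\tilde C(t)|I_j|^{1/2}\bigl(\Vert u\Vert_{X_1(I_j)}+\Vert v\Vert_{X_1(I_j)}\bigr)\Vert u-v\Vert_{X_1(I_j)}\leq\tilde C(t)\Vert u(\tau_{j-1})-v(\tau_{j-1})\Vert_{H^1}+\tfrac12\Vert u-v\Vert_{X_1(I_j)}, \]
so $\sup_{s\in I_j}\Vert u(s)-v(s)\Vert_{H^1}\leq\Vert u-v\Vert_{X_1(I_j)}\leq 2\tilde C(t)\Vert u(\tau_{j-1})-v(\tau_{j-1})\Vert_{H^1}$.

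With this in hand I would prove, by induction on $k$, that on $A$ one has $\sup_{s\in[0,\tau_k\wedge t]}\Vert u(s)-v(s)\Vert_{H^1}\leq(2\tilde C(t))^{k+1}\Vert u_0-v_0\Vert_{H^1}$; restricting to $\{\tau_k\geq t\}$, on which $\tau_k\wedge t=t$, then gives the lemma. The base case $k=0$ is the one-interval estimate for $j=0$. For the step from $k$ to $k+1$: on $A\cap\{\tau_k\geq t\}$ nothing is needed, since then $\tau_{k+1}\wedge t=\tau_k\wedge t=t$ and $2\tilde C(t)\geq 1$; on $A\cap\{\tau_k<t\}$ the one-interval estimate for $j=k+1$ gives $\sup_{s\in[\tau_k,\tau_{k+1}\wedge t]}\Vert u(s)-v(s)\Vert_{H^1}\leq 2\tilde C(t)\Vert u(\tau_k)-v(\tau_k)\Vert_{H^1}$, and since $\tau_k=\tau_k\wedge t$ here the inductive hypothesis bounds $\Vert u(\tau_k)-v(\tau_k)\Vert_{H^1}$ by $(2\tilde C(t))^{k+1}\Vert u_0-v_0\Vert_{H^1}$; combining with the inductive bound on $[0,\tau_k]$ and $2\tilde C(t)\geq 1$ closes the induction. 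The delicate point is the legitimacy of the restart at $\tau_k$ — identifying the Duhamel formula with shifted noise $\overline u^{\tau_k}$ and data $u(\tau_k)$ with the original solution, and controlling its $X_1(\tau_k,\cdot)$ norm on the shifted interval — which is precisely where the defining inequalities of the $\tau_j$ enter; once this is secured, monotonicity of $\tilde C$ keeps every contraction factor at $\tfrac12$, giving the clean constant, and what remains is the bookkeeping of splitting $[0,t]$ at $\tau_k\wedge t$.
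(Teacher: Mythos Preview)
Your argument is correct and follows the same route as the paper: on each random interval $[\tau_{j-1},\tau_j]$ one first bounds $\Vert u\Vert_{X_1(I_j)},\Vert v\Vert_{X_1(I_j)}\leq 2D_j$ (the paper does this by showing the Duhamel map preserves the ball of radius $2L_0$ rather than by bootstrap, but the content is identical), then subtracts the two Duhamel formulas and uses the defining inequality of $\tau_j$ to make the bilinear term a $\tfrac12$-contraction, obtaining the factor $2\tilde C(t)$ per interval; the induction on $k$ and the restart at $\tau_k$ via the strong Markov property are exactly as in the paper. Your formulation of the inductive hypothesis on $[0,\tau_k\wedge t]$ (and the explicit remark that $\tilde C(t)\geq 1$, $R\geq 1$) is in fact slightly cleaner bookkeeping than the paper's, but there is no substantive difference.
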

\cob

\begin{proof}[Proof of Lemma~\ref{small-tau}]
We start the induction with $k=0$.
Fix $s\in [0,t]$.
Then on the set $ \{\tau_0> s\}$, we have
  \begin{equation}
      \sup_{r\in[0,s]}\Vert u(r)-v(r)\Vert_{H^1}\leq 2\CC(t) \Vert u_0-v_0\Vert_{H^1}
   \period
  \end{equation}
Indeed, let 
  \begin{equation}
    L_0=\CC(t)R+\Vert\overline u\Vert_{X_1(s)}+ \left\Vert\int_0^\cdot U_\lambda (\cdot-r)f \,dr\right\Vert_{X_1(s)} 
   \label{EQ19}
  \end{equation}
 and let  $g\in X_1(s)$ be such that 
$\Vert g\Vert_{X_1(s)}\leq 2L_0$. Then on the set $\{\tau_0>s\}$
  \begin{align}
 \Vert{\cal T}_{u_0}(g)\Vert_{X_1(s)}&\leq\CC(s) \Vert u_0\Vert_{H^1}+ \left\Vert\int_0^\cdot U_\lambda (\cdot-r)g(r)\partial_x g(r) \,dr\right\Vert_{X_1(s)}+ \left\Vert\int_0^\cdot U_\lambda (\cdot-r)f \,dr\right\Vert_{X_1(s)} +\Vert\overline u\Vert_{X_1(s)}
\nonumber\\&
  \leq  \CC(t) R +\Vert\overline u\Vert_{X_1(s)}+ \left\Vert\int_0^\cdot U_\lambda (\cdot-r)f \,dr\right\Vert_{X_1(s)} + \CC(t)s^{1/2} \Vert g\Vert_{X_1(s)}^2\leq L_0 +4 L_0^2 \CC(t)s^{1/2}
  \nonumber\\&
  \leq L_0 +L_0/2\leq 2L_0
  \end{align}
where the last inequality holds 
due to inclusion $\{\tau_0>s\}\subset \{L_0\leq \fractext{1}{8\CC(t)s^{1/2}}\}$.

Note that $u$ is a fixed point of ${\cal T}_{u_0}$ and 
that ${\cal T}_{u_0}$ maps the ball of radius $2L_0$ of $X_1(s)$ into itself. Then $\Vert u\Vert_{X_1(s)}\leq 2L_0$. Similarly,  $\Vert v\Vert_{X_1(s)}\leq 2L_0$.
Now, observe that
$u$ (resp.~$v$) 
is a fixed point of ${\cal T}_{u_0}$ (resp.~${\cal T}_{v_0}$). Therefore,  on the set $ A\cap \{\tau_0> s\}$,
  \begin{align}
 \Vert u-v\Vert_{X_1(s)}&= \Vert{\cal T}_{u_0}(u)-{\cal T}_{v_0}(v)\Vert_{X_1(s)}\nonumber\\
  &= \Vert U_\lambda(\cdot)(u_0-v_0)\Vert_{X_1(s)}\nonumber\\
  &\indeq
+\left\Vert\int_0^\cdot U_\lambda (\cdot-r)\left((u(r)-v(r))\partial_x u(r)+v(r)\partial_x (u(r)-v(r))\right) \,dr\right\Vert_{X_1(s)}\nonumber\\
  &\leq \CC(t) \Vert u_0-v_0\Vert_{H^1}+\CC(t)s^{1/2}\Vert u-v\Vert_{X_1(s)}(\Vert u\Vert_{X_1(s)}+\Vert v\Vert_{X_1(s)})\nonumber\\
  &\leq \CC(t) \Vert u_0-v_0\Vert_{H^1}+4 \CC(t)s^{1/2} L_0 \Vert u-v\Vert_{X_1(s)} \nonumber\\
  &\leq \CC(t) \Vert u_0-v_0\Vert_{H^1}+\frac{1}{2}\Vert u-v\Vert_{X_1(s)}
  \end{align}
which implies that  on the set $ \{\tau_0> s\}$,
\begin{equation}\sup_{r\in[0,s]}\Vert u(r)-v(r)\Vert_{H^1}\leq \Vert u-v\Vert_{X_1(s)}\leq  2\CC(t) \Vert u_0-v_0\Vert_{H^1}
   \period
\end{equation}
By the continuity in time of the processes, we have
\begin{equation}\sup_{r\in[0,\tau_0]}\Vert u(r)-v(r)\Vert_{H_1}\leq  2\CC(t) \Vert u_0-v_0\Vert_{H^1}
   \period\end{equation}

We finish the proof by induction. Assume that for some $k\leq n-1$, 
we have on
the set $A\cap \{\tau_k\geq t\}$ 
  \begin{equation}\sup_{r\in[0,\tau_k]}\Vert u(r)-v(r)\Vert_{H_1}\leq  (2\CC(t))^{k+1} \Vert u_0-v_0\Vert_{H^1}
   \period
\end{equation}
In order to obtain the heredity, we need to give the upper bound on $A\cap \{\tau_{k+1}\geq t>\tau_k\}$. 

Note that by the strong Markov property for all $s\geq \tau_k$ 
  \begin{align}
   u_s&= U_\lambda (s-\tau_k)u_{\tau_k}-\int_{\tau_k}^s U_\lambda (s-r) u_r \partial_x u_r \,dr+ \int_{\tau_k}^s U_\lambda (s-r) f \,dr +\overline u^{\tau_k}_s
  \end{align}
and
  \begin{align}
   v_s&= U_\lambda (s-\tau_k)v_{\tau_k}-\int_{\tau_k}^s U_\lambda (s-r) v_r \partial_x v_r \,dr+ \int_{\tau_k}^s U_\lambda (s-r) f \,dr +\overline u^{\tau_k}_s
   \period
  \end{align}
On the set  $A\cap \{\tau_{k+1}\geq t>\tau_k\}$, we have $\Vert u_{\tau_k}\Vert_{H^1}\leq R$ and $\Vert v_{\tau_k}\Vert_{H^1}\leq R$.
Hence, we may define $L_{k+1}= \CC(t)R+\Vert\overline u^{\tau_k}\Vert_{X_1(\tau_k,s)}+ \left\Vert\int_{\tau_k}^\cdot U_\lambda (\cdot-r)f \,dr\right\Vert_{X_1(\tau_k,s)}$ for $s\in[\tau_k,t]$.  

Similarly, we can prove that on the set $A\cap \{s<\tau_{k+1}\}$ we have  $\Vert u\Vert_{X_1(\tau_k,s)}\leq 2L_{k+1}$ and $\Vert u\Vert_{X_1(\tau_k,s)}\leq 2L_{k+1}$ and proceed as in the proof for $k=0$ that 
  \begin{equation}\sup_{r\in[\tau_k,\tau_{k+1}]}\Vert u(r)-v(r)\Vert_{H_1}\leq 2\CC(t)\Vert u(\tau_k)-v(\tau_k)\Vert_{H^1}\leq (2\CC(t))^{k+2}\Vert u_0-v_0\Vert_{L^2}
  \end{equation}
and the lemma is established.
\end{proof}

We now continue the proof of the Feller property 
starting from the
inequality \eqref{control-feller}.  Using the previous lemma 
we have 
  \begin{equation}
    \Vert u(t)-v(t)\Vert_{H^1}\leq (2\CC(t))^{k+1}\Vert u_0-v_0\Vert_{H^1}
   \hbox{~~on~}A\cap \{\tau_n\geq t\}
   \period
  \end{equation}
Therefore, as $v_0\to u_0$ we have 
  \begin{align}
   |\xi(u_t)-\xi(v_t)|1_{\{\max\{\sup_{s\in[0,t]} \Vert u(s)\Vert^2_{H^1} ,\sup_{s\in[0,t]} \Vert v(s)\Vert^2_{H^1} \}\leq R\}} 1_{\{\tau_n\geq t\}}\to 0\mbox { a.s.}
  \end{align}
Using the dominated convergence theorem  
\begin{equation}\dbE\left[[|\xi(u_t)-\xi(v_t)|1_{\{\max\{\sup_{s\in[0,t]} \Vert u(s)\Vert^2_{H^1} ,\sup_{s\in[0,t]} \Vert v(s)\Vert^2_{H^1} \}\leq R\}} 1_{\{\tau_n\geq t\}}\right]\to 0\end{equation}
as $v_0\to u_0$ in $H^1$. 
Note that the choice of $R$ and $n$ does not depend on $v_0$. 

\nnewpage
\subsection{Asymptotic compactness of the semi-group}
\label{secasy}
We use the distributional convergence over various Sobolev spaces. 
In order to fix the ideas, we first recall the definition.

\begin{definition}\label{defn-convergence}
Let $\Gamma$ be a topological vector space, 
and let $\{X_n\}_{n\geq 0}$ and $X_\infty$ be random variables taking
values in $\Gamma$, possibly defined in different probability spaces.
We say that $X_n$ converges to $X_\infty$ in distribution in $\Gamma$ if
  \begin{equation}
    \dbE[F(X_n)]\to \dbE[F(X_\infty)]
    \hbox{~~as~}n\to\infty
   \label{EQ23}
  \end{equation}
for all continuous bounded functions $F\colon\Gamma\to\dbR$.
\end{definition}

We shall exploit the classical results on the asymptotic compactness
of the solution operator of the KdV equation in order
to prove the following lemma.

\coe
\begin{Lemma}\label{tightness-p}
For {\color{blue} any sequence of deterministic initial condition $u_0^n$ satisfying $$R:=\sup_n \left\{\Vert u^n_0\Vert^2_{H^1} \right\}<\infty$$}
and a sequence of nonnegative numbers 
$t_1, t_2, \ldots$ 
such that $\lim_{n\to \infty} t_n=\infty$, the set of probabilities 
$\{P_{t_n}(u_0^n,\cdot):n\in \dbN\}$ is tight in $H^1$. 
\end{Lemma}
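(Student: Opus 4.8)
The plan is to deduce tightness of $\{P_{t_n}(u_0^n,\cdot)\}$ in $H^1(\dbR)$ from three ingredients: the uniform-in-$n$ moment bounds already at our disposal, a soft compactness argument producing tightness in $L^2_{\rm loc}(\dbR)$, and the convergence-of-norms statement proved in the Appendix together with the Hilbert-space convergence result of \cite{Pr}. Throughout, $u^n$ denotes the solution of \eqref{kdv-intro} with the deterministic initial datum $u_0^n$, and $\nu_n=P_{t_n}(u_0^n,\cdot)=\mathrm{Law}(u^n(t_n))$.

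First I would record the uniform bounds. Since $u_0^n$ is deterministic and $\|u_0^n\|_{H^1}^2\le R$, we have $\dbE[\|u_0^n\|_{L^2}^{2k}]\le R^k$ and $\dbE[\|u_0^n\|_{H^1}^{2k}]\le R^k$, so Lemmas~\ref{main-bounds} and~\ref{lemma-invariant-I} yield a constant $C=C(R)$ with $\sup_n\sup_{t\ge0}\dbE[\|u^n(t)\|_{H^1}^2+\|u^n(t)\|_{H^1}^4]\le C$. In particular the second and fourth moments of $\|v\|_{H^1}$ under $\nu_n$ are bounded uniformly in $n$. Since $t_n\to\infty$ and a finite family of probability measures is tight, it then suffices to prove that every subsequence of $(\nu_n)$ has a further subsequence converging narrowly in $\mathcal P(H^1(\dbR))$ to some probability measure; because $H^1(\dbR)$ is Polish, this relative compactness is equivalent to tightness by Prokhorov's theorem.

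To extract such a subsequence I would first work in $L^2_{\rm loc}(\dbR)$. For each $j\in\dbN$ the embedding $H^1((-j,j))\hookrightarrow L^2((-j,j))$ is compact, so the uniform $H^1$-bound and Chebyshev's inequality show that $\{\nu_n\}$ is tight in $L^2_{\rm loc}(\dbR)$ (diagonalizing over $j$). Hence along a subsequence $u^n(t_n)$ converges in distribution in $L^2_{\rm loc}(\dbR)$, and by Skorokhod's representation theorem there exist, on one probability space, random variables $Y_n\stackrel{d}{=}u^n(t_n)$ with $Y_n\to Y$ almost surely in $L^2_{\rm loc}(\dbR)$. The uniform second-moment bound forces $Y_n\rightharpoonup Y$ weakly in $H^1(\dbR)$ (a.s.) and $\dbE\|Y\|_{H^1}^2<\infty$; write $\nu=\mathrm{Law}(Y)\in\mathcal P(H^1(\dbR))$. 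Now I would invoke the Appendix: it supplies, along this subsequence, the convergence of norms $\dbE\|u^n(t_n)\|_{L^2}^2\to\dbE\|Y\|_{L^2}^2$ and $\dbE\|\partial_x u^n(t_n)\|_{L^2}^2\to\dbE\|\partial_x Y\|_{L^2}^2$, that is $\dbE\|Y_n\|_{H^1}^2\to\dbE\|Y\|_{H^1}^2$. Weak convergence in the Hilbert space $H^1(\dbR)$ together with this convergence of squared norms gives, by \cite{Pr}, convergence of $\nu_n$ to $\nu$ narrowly in $\mathcal P(H^1(\dbR))$ with its \emph{strong} topology. (Alternatively, one may argue directly: from $Y_n\rightharpoonup Y$ weakly one gets $\langle Y_n,Y\rangle_{H^1}\to\|Y\|_{H^1}^2$ a.s., the family $\{\|Y_n\|_{H^1}^2\}$ is uniformly integrable by the fourth-moment bound, hence $\dbE\langle Y_n,Y\rangle_{H^1}\to\dbE\|Y\|_{H^1}^2$, and combining with $\dbE\|Y_n\|_{H^1}^2\to\dbE\|Y\|_{H^1}^2$ one finds $\dbE\|Y_n-Y\|_{H^1}^2\to0$, so $\nu_n\to\nu$ narrowly in $H^1(\dbR)$.) Since the original subsequence was arbitrary, $(\nu_n)$ is relatively compact, hence tight, in $\mathcal P(H^1(\dbR))$.

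The only step that is not a soft compactness argument is the convergence of norms taken from the Appendix, equivalently the bound $\limsup_n\dbE\|u^n(t_n)\|_{H^1}^2\le\dbE\|Y\|_{H^1}^2$ — the statement that no $L^2$- or $H^1$-mass escapes to spatial infinity as $t_n\to\infty$. I expect this to be the genuine obstacle. It should be addressed using the exponential decay coming from $\lambda>0$, the $L^2$ energy identity behind Lemma~\ref{main-bounds}, the evolution \eqref{evolution-I} of the invariant $I(u(t))$, and a spatial truncation argument patterned on the deterministic asymptotic-compactness theory for the damped KdV equation; this is precisely what the Appendix carries out.
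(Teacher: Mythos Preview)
Your proposal is correct and follows essentially the same three-step architecture as the paper: uniform $H^1$ moment bounds $\Rightarrow$ tightness in $L^2_{\rm loc}$ $\Rightarrow$ convergence of norms from the Appendix $\Rightarrow$ upgrade to tightness in $L^2$ and then $H^1$. The only tactical difference is in the ``upgrade'' step: the paper uses an orthonormal basis of compactly supported functions and the tail criterion $\lim_{N\to\infty}\sup_n\dbE\bigl[\sum_{i\ge N}(u^n_{t_n},e_i)^2\bigr]=0$ from \cite{Pr}, whereas you pass through Skorokhod and show $\dbE\Vert Y_n-Y\Vert_{H^1}^2\to0$ directly via weak convergence plus convergence of norms and the fourth-moment bound --- both are valid, and yours is arguably cleaner.

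One correction to your expectation about the Appendix: it does \emph{not} proceed by a spatial truncation/cut-off argument in the style of the deterministic damped-KdV literature. Instead, it argues by contradiction, shifts back by a large time $T$, establishes tightness of the path laws in $C([0,T];L^2_{\rm loc})$ via the Aldous criterion (Lemma~\ref{compactness-of-measures}), identifies the limit as a martingale solution (Lemma~\ref{limiting-measure}), and then exploits the energy identity \eqref{bound-u1} and the evolution \eqref{evolution-I} of $I$ together with the exponential factor $e^{-2\lambda T}$ to close the contradiction. Also note that the Appendix literally yields $\dbE[I(u^n_{t_n})]\to\dbE[I(\xi)]$; the paper then combines this with the convergence of $\dbE\int (u^n_{t_n})^3$ (which follows from $L^2$-convergence plus the uniform $H^1$ bound) to extract $\dbE\Vert\partial_x u^n_{t_n}\Vert_{L^2}^2\to\dbE\Vert\partial_x\xi\Vert_{L^2}^2$.
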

\cob

\begin{proof}[Proof of Lemma~\ref{tightness-p}]
Without loss of generality,
we may assume that $t_1, t_2, \ldots$ is increasing. 
Let $\{u_0^n\}_{n=1}^{\infty}$ be a sequence of initial conditions as above. 
We denote by $\{u^n(t)\}_{n=1}^{\infty}$ the respective solutions of \eqref{kdv-intro}. We intend to show that there is a subsequence of $\{u^n_{t_n}\}$ that converges in distribution in $H^1$. 

By Lemma~\ref{lemma-invariant-I}, we have the bound
\begin{equation}\label{bound-H1}
\sup_{n}\dbE[\Vert u^n({t_n})\Vert^2_{H^1}]\le C(R)
   \period
\end{equation}

{\it Step 1: convergence in distribution in $L^{2}_{\rm loc}(\dbR)$}

Bounded sets of $H^1(\dbR)$ are relatively compact in $L^{2}_{\rm loc}(\dbR)$. Thus the inequality \eqref{bound-H1}  and Prokhorov's theorem in $L^{2}_{\rm loc}(\dbR)$ allow us to conclude that there exists an $L^{2}_{\rm loc}$ valued random variable $\xi$ (possibly defined on another probability space) and a subsequence of $\{u^n_{t_n}\}$ 
such that 
  \begin{equation}\label{limit-H-1}
    u^n_{t_n}\to\xi\mbox{ in distribution in  } L^{2}_{\rm loc}
     \hbox{~as~}n\to\infty
   \period
  \end{equation}
Let $\{f_i\}_{i=1}^{\infty}$ be an orthonormal basis of $H^1(\dbR)$ 
with $f_i$ smooth and compactly supported.
For all $i\in{\mathbb N}$ 
and $M>0$ we define the  mapping $$v\to \psi_{i,M}(v):=|(v,f_i)_{H^1}|\wedge M=|(v,(1-\partial_x^2)f_i)|\wedge M$$
which is continuous in $L^{2}_{\rm loc}(\dbR)$ and bounded. 
Therefore the  distributional $L^{2}_{\rm loc}(\dbR)$ convergence imply 
$$\dbE\left[\sum_{i=1}^N \left((u^n_{t_n},f_i)^2_{H^1}\wedge M^2\right)\right]\to \dbE\left[\sum_{i=1}^N \left((\xi,f_i)^2_{H^1}\wedge M^2\right)\right].$$
Therefore, for all $N\in{\mathbb N}$ and $M>0$ 
  \begin{equation}
   \dbE\left[\sum_{i=1}^N \left((\xi,f_i)^2_{H^1}\wedge M^2\right)\right]\leq C(R)
   \period
  \end{equation}
  Sending $M$ to infinity, by Fatou's Lemma we obtain that 
    \begin{equation}
   \dbE\left[\sum_{i=1}^N (\xi,f_i)^2_{H^1}\right]\leq C(R)
   \label{EQ34}
  \end{equation}
and similarly sending $N$ to infinity, we get 
 \begin{equation}
  \dbE[\Vert\xi\Vert^2_{H^1}]
       \leq\liminf_N \dbE\left[\sum_{i=1}^N (\xi,f_i)^2_{H^1}\right]\leq C(R)
   \period
   \label{EQ24}
  \end{equation}
This shows that $\xi$ is $H^1(\dbR)$-valued.

{\it Step 2: convergence in distribution in $L^2$}.
In order to prove this, we use 
\coe
  \begin{equation}
   \lim_n \dbE[\Vert  u^n_{t_n}\Vert^2_{L^2}]
     =\dbE[\Vert\xi\Vert^2_{L^2}]
   \period
   \label{EQ25}
  \end{equation}
\cob
The proof of this fact
is given in the appendix.


Recall from Section~\ref{sec2} 
that $\{e_i\}$ 
denotes an orthonormal basis of $L^2({\mathbb R})$
consisting
of smooth compactly supported functions.
For all $N\in{\mathbb N}$ and $M>0$, the convergence in distribution in $L^2_{\rm loc}(\dbR)$ of $u^n_{t_n}$ implies that   \begin{equation}
   \dbE\left[\sum_{i=1}^N (u^n_{t_n},e_i)^2\wedge M^2\right]\to\dbE\left[\sum_{i=1}^N (\xi,e_i)^2\wedge M^2\right].
  \end{equation}
Using the inequality \eqref{EQ54}, we obtain that the family $\Vert u^n_{t_n}\Vert_{L^2}^2$ is uniformly integrable. Thus we can take $M$ to infinity and obtain 
  \begin{equation}
   \dbE\left[\sum_{i=1}^N (u^n_{t_n},e_i)^2\right]\to\dbE\left[\sum_{i=1}^N (\xi,e_i)^2\right].
  \end{equation}
Therefore, combined with
\eqref{EQ25}, we get
  \begin{equation}\label{convergence-remainder}
   \dbE\left[\sum_{i=N+1}^\infty (u^n_{t_n},e_i)^2\right]\to\dbE\left[\sum_{i=N+1}^\infty (\xi,e_i)^2\right]
   \period
  \end{equation}
Now, fix $\epsilon >0$. There exists $N_0\in{\mathbb N}_0$ such that 
  \begin{equation}
   \dbE\left[\sum_{i=N_0+1}^\infty (\xi,e_i)^2\right]\leq \epsilon/2   
   \period
   \label{EQ26}
  \end{equation}
Then, using \eqref{convergence-remainder}, there exists
$n_\epsilon\in{\mathbb N}$ 
such that 
  \begin{equation}
   \sup_{n\geq n_\epsilon}
       \dbE\left[\sum_{i=N_0+1}^\infty (u^n_{t_n},e_i)^2\right]\leq \epsilon   
   \label{EQ27}
  \end{equation}
By the uniform second moment bounds \eqref{bound-H1}, 
  \begin{equation}
   \lim_{N\to \infty}
      \dbE\left[\sum_{i=N}^\infty (u^n_{t_n},e_i)^2\right]=0   
   \comma n\leq n_{\epsilon}-1
   \period
   \label{EQ29}
  \end{equation}
By \eqref{EQ27} and \eqref{EQ29}, there exists
$N_1\geq N_0$ such that
  \begin{equation}
    \sup_{n\in{\mathbb N}}
    \dbE\left[\sum_{i=N_1+1}^\infty (u^n_{t_n},e_i)^2\right]\leq \epsilon
   \period
   \label{EQ33}
  \end{equation}
Thus we have proven that 
  \begin{equation}\label{limit-Pro}
   \lim_{N\to \infty }\sup_n \dbE\left[\sum_{i=N}^\infty (u^n_{t_n},e_i)^2\right]=0
   \period
  \end{equation}
By  \cite[Theorem 1.13]{P} this implies tightness in distribution  in $L^2$ of measures of $\{u^n_{t_n}\}$. Note that any limiting measure can only be the measure of $\xi$. Thus 
\begin{equation}\label{limit-l2}
u^n_{t_n}\to \xi
\end{equation}
 in distribution in $L^2$. 

We emphasize that we have \emph{not} taken any further subsequence to pass from \eqref{limit-H-1} to \eqref{limit-l2}. We have proven that any limit in distribution in $L^{2}_{\rm loc}(\dbR)$ of $\{u^n_{t_n}\}$ is also its limit in distribution in $L^2(\dbR)$. 

{\it Step 3: Convergence in distribution in $H^1$}

The fundamental tool is the fact
\coe
  \begin{equation}
   \dbE[I(\xi)]=\lim_n \dbE[I(u^n_{t_n})]
   \label{EQ35}
  \end{equation}
\cob
the proof of which is given in the appendix.
Note that we have the uniform bounds \eqref{bound-H1} and convergence
in distribution in $L^2$. Using Agmon's inequality, we obtain that the mapping $v\to \int v^3(x)dx$ is continuous in $L^2$ on bounded sets of $H^1$. 
Thus, using again the uniform integrability of  the families $\Vert \partial_x u\Vert_{L^2}^2$ and $\Vert  u\Vert_{L^2}^2$,
   \begin{equation}\dbE\left[\int (u^n_{t_n})^3(x)dx\right]\to\dbE\left[\int (\xi)^3(x)dx\right]\period\label{EQ45}\end{equation} 

Combined with 
\eqref{EQ35}
this implies
\begin{equation}\lim_n \dbE[\Vert\partial_x u^n_{t_n}\Vert_{L^2}^2]=\dbE[\Vert\partial\xi\Vert_{L^2}^2]
   \period\label{EQ44}\end{equation}
Note that the inequality \eqref{EQ09}, for $k=2$ gives the uniform integrability of $\Vert u^n_{t_n}\Vert^2_{H^1}$.
Repeating for the space $H^1$ the same ideas that allowed us to obtain the convergence in distribution in $L^2$ we obtain
  \begin{equation}u^n_{t_n}\to \xi\label{EQ46}\end{equation} 
in distribution in $H^1$. 
\end{proof}
Before proving the lemma \ref{lemma-tightness} we prove the following lemma.

\coe
\begin{Lemma}\label{tightness-on-compacts}
Let $K$ be a compact subset of $H^1(\dbR)$. Then the set of measures on $H^1(\dbR)$
$$\{P_{s}(v,\cdot):s\in[0,1],\, v\in K\}$$
is tight. 
\end{Lemma}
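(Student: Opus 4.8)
The plan is to deduce this from Lemma~\ref{tightness-p} together with a compactness and continuity argument, reducing the statement about the whole family $\{P_s(v,\cdot):s\in[0,1],\,v\in K\}$ to a sequential statement. Suppose the family were not tight; then there is $\varepsilon>0$ and sequences $v^n\in K$ and $s_n\in[0,1]$ such that no compact set of $H^1(\dbR)$ has $P_{s_n}(v^n,\cdot)$-measure at least $1-\varepsilon$ for all $n$. Since $K$ is compact in $H^1(\dbR)$ and $[0,1]$ is compact, after passing to a subsequence we may assume $v^n\to v_\infty$ in $H^1(\dbR)$ with $v_\infty\in K$, and $s_n\to s_\infty\in[0,1]$. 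In particular $R:=\sup_n\Vert v^n\Vert_{H^1}^2<\infty$, so Lemma~\ref{lemma-invariant-I} (or its proof, giving finite-time bounds) yields $\sup_n\dbE[\Vert u^n(s_n)\Vert_{H^1}^2]\le C(R)$, where $u^n$ solves \eqref{kdv-intro} with initial datum $v^n$. This already gives, by Prokhorov's theorem and the relative compactness of bounded sets of $H^1$ in $L^2_{\rm loc}(\dbR)$, tightness in $L^2_{\rm loc}(\dbR)$, and along a further subsequence $u^n_{s_n}\to\xi$ in distribution in $L^2_{\rm loc}(\dbR)$ for some $H^1$-valued $\xi$.

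The key point is to upgrade this to tightness in $H^1(\dbR)$, which is exactly the content of the machinery developed in the proof of Lemma~\ref{tightness-p}: the only places where the hypothesis $t_n\to\infty$ was used there were to invoke the convergence-of-norms statements \eqref{EQ25} and \eqref{EQ35} (proved in the Appendix using the asymptotic compactness of the deterministic KdV flow). For bounded times $s_n\in[0,1]$ the situation is in fact easier, because one does not need the long-time dissipation: the finite-time uniform bounds of Lemma~\ref{finite-time-H1} give $\dbE[\sup_{s\in[0,1]}\Vert u^n(s)\Vert_{H^1}^2]\le C(R)$, hence uniform integrability of $\Vert u^n_{s_n}\Vert_{L^2}^2$ and of $\Vert u^n_{s_n}\Vert_{H^1}^2$, and it remains only to show that no mass escapes to infinity, i.e. that $\lim_{N\to\infty}\sup_n\dbE[\sum_{i\ge N}(u^n_{s_n},e_i)^2]=0$. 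For this I would run the continuity-of-the-flow argument: by the Feller property (Lemma~\ref{theo-feller}) and its quantitative refinement in Lemma~\ref{small-tau}, the map $(s,v)\mapsto$ Law of $u(s)$ with $u(0)=v$ is continuous from $[0,1]\times\{v:\Vert v\Vert_{H^1}\le R\}$ into the space of probability measures on $H^1(\dbR)$ equipped with weak convergence, at least locally; combined with $v^n\to v_\infty$, $s_n\to s_\infty$ this forces $P_{s_n}(v^n,\cdot)\to P_{s_\infty}(v_\infty,\cdot)$ in distribution in $H^1(\dbR)$, and a single probability measure on the Polish space $H^1(\dbR)$ is tight, contradicting the assumed failure of tightness.

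Concretely, I would carry out the steps in this order. First, reduce to sequences $v^n\in K$, $s_n\in[0,1]$ and extract $v^n\to v_\infty$, $s_n\to s_\infty$ by compactness. Second, apply Lemma~\ref{finite-time-H1} to get the uniform bound $\sup_n\dbE[\sup_{s\in[0,1]}\Vert u^n(s)\Vert_{H^1}^2]\le C(R)$, hence tightness and (along a subsequence) convergence in distribution in $L^2_{\rm loc}(\dbR)$ to some $\xi$, with $\xi$ $H^1$-valued by the Fatou argument of Step~1 of Lemma~\ref{tightness-p}. Third, prove the convergence of $L^2$-norms $\dbE[\Vert u^n_{s_n}\Vert_{L^2}^2]\to\dbE[\Vert\xi\Vert_{L^2}^2]$ and of $\dbE[I(u^n_{s_n})]\to\dbE[I(\xi)]$; for bounded times these follow from the continuous dependence on the initial data and time of the solution in $C([0,1];H^1)$ established via Lemma~\ref{small-tau}, applied with $u_0^n=v^n\to v_\infty$ — so that in fact $\xi$ is the law of $u_\infty(s_\infty)$ where $u_\infty$ starts from $v_\infty$ — rather than from the Appendix's asymptotic-compactness arguments. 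Fourth, feed these into the no-escape-of-mass estimate \eqref{limit-Pro} exactly as in Steps~2 and~3 of the proof of Lemma~\ref{tightness-p}, invoking \cite[Theorem~1.13]{P} to get tightness in $L^2(\dbR)$ and then in $H^1(\dbR)$. The main obstacle is the third step: one must justify the convergence of the nonlinear functionals $\Vert\cdot\Vert_{L^2}^2$ and $I(\cdot)$ along $u^n_{s_n}$ without the long-time smoothing that powered the Appendix; the natural route is to observe that, since the initial data $v^n$ converge in $H^1$ and the times $s_n$ stay bounded, the solutions $u^n$ converge to $u_\infty$ in probability in $C([0,1];H^1(\dbR))$ by a stopping-time/fixed-point argument identical to the one in Lemma~\ref{small-tau} (now with random, rather than deterministic, differences of initial data but still bounded in $H^1$), which directly yields $u^n_{s_n}\to u_\infty(s_\infty)$ in probability in $H^1$, hence in distribution, and then all the required norm convergences follow from the uniform integrability provided by Lemma~\ref{lemma-invariant-I} and Lemma~\ref{finite-time-H1}. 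Once $P_{s_n}(v^n,\cdot)\to P_{s_\infty}(v_\infty,\cdot)$ in $H^1(\dbR)$ is established, tightness of the family is immediate and the lemma follows.
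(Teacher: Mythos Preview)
Your proposal is correct, and the decisive argument you eventually land on---extract a convergent subsequence $(s_n,v^n)\to(s_\infty,v_\infty)$ by compactness of $[0,1]\times K$, then use Lemma~\ref{finite-time-H1} and the stopping-time/fixed-point estimate of Lemma~\ref{small-tau} to get $u^n\to u_\infty$ in probability in $C([0,1];H^1)$, hence $u^n_{s_n}\to u_\infty(s_\infty)$ in distribution in $H^1$---is exactly the paper's proof.

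The only comment is that your write-up takes a long detour through the machinery of Lemma~\ref{tightness-p} (tightness in $L^2_{\rm loc}$, convergence of $\dbE[\Vert\cdot\Vert_{L^2}^2]$ and $\dbE[I(\cdot)]$, the no-escape-of-mass estimate~\eqref{limit-Pro}, and \cite[Theorem~1.13]{P}) before observing that Lemma~\ref{small-tau} already delivers convergence in probability in $H^1$ directly. Once you have that, Steps~2--4 of your plan are redundant: convergence in probability in $H^1$ immediately gives convergence in distribution in $H^1$, which is all you need. The paper skips straight to this direct argument and never invokes the $L^2_{\rm loc}\to L^2\to H^1$ upgrade for this lemma.
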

\cob
\begin{proof}
We will take a countable subset $\{P_{s}(v,\cdot):s\in[0,1],\, v\in K\}$ and show that it has a convergent subsequence. 
Let $(s^n,v^n)\in [0,1]\times K$. By compactness of the sets there exists a subsequence of $(s^n,v^n)$ (still denoted $(s^n,v^n)$) the converges to $(s,v)\in [0,1]\times K$.
Denote by $u^n$ the solution of \eqref{kdv-intro-A} with initial data $v^n$ and by $u$ the solution of \eqref{kdv-intro-A} with intial data $v$. 

We now prove that there exists a subsequence $(s^{n_k},v^{n_k})$ such that 
\begin{align}\label{conv-sub-seq}
\dbP\mbox{-a.s. } \lim_k \Vert u^{n_k}_s- u_s\Vert_{H^1}+\Vert u_{s^{n_k}}-u^s\Vert_{H^1}\to 0 \mbox{ as }k\to \infty.
\end{align}
The almost sure convergence $\Vert u_{s^n}-u^s\Vert_{H^1}\to 0$ is a direct consequence of $u\in C([0,1];H^1)$. Fix $\epsilon>0$ and $\delta>0$ and similarly to the proof of the Feller property denote $R_0=\sup_{v\in K}\Vert v\Vert_{H^1}+1.$ We choose $R>0$ (independent of $n$)such 
  \begin{align*}
    & \dbP\left(
            \max\left\{
                \sup_{s\in[0,t]}\Vert u(s)\Vert^2_{H^1},
                \sup_{s\in[0,t]}\Vert u^n(s)\Vert^2_{H^1}\right\}\geq R\right)
    \nonumber\\&\indeq
     \leq\frac{1}{R} \dbE\left[\sup_{s\in[0,t]}\Vert u(s)\Vert^2_{H^1}+\sup_{s\in[0,t]}\Vert u^n(s)\Vert^2_{H^1}\right]
   \nonumber\\&\indeq
 \leq\frac{C(R_0)}{R}\leq \epsilon /2.
   \period
  \end{align*}
Given the choice of $R$ we define the hitting times $\tau_k$ as in \eqref{tau-def}(with $t=1$). We choose $N$ such that $$\dbP(\tau_N\leq 1)\leq \epsilon/2.$$ 
We define the events    
\begin{align*}
    A^n=\left\{
          \max\left\{\sup_{s\in[0,t]} \Vert u(s)\Vert^2_{H^1} ,\sup_{s\in[0,t]}
      \Vert u^n(s)\Vert^2_{H^1} 
              \right\}\leq R
      \right\}
   \period
  \end{align*}
Thanks to Lemma \ref{small-tau}, on the set $A^n\cap \{\tau_N\geq 1\}$ one has 
  \begin{align*}
   \sup_{s\in[0,1]}\Vert u(t)-u^n(t)\Vert_{H^1}\leq (2\tilde C(1))^{N+1}\Vert v-v^n\Vert_{H^1}.
  \end{align*}
  We choose $n$ large enough to have $ (2\tilde C(1))^{N+1}\Vert v-v^n\Vert_{H^1}\leq \delta$.
This implies that for all $n$ large enough
$$\dbP\left(\sup_{s\in[0,1]} \Vert u(t)-u^n(t)\Vert_{H^1}\leq \delta \right) \geq \dbP\left(A^n\cap \{\tau_N\geq 1\}\right)\geq 1-\epsilon$$
which is exactly $\sup_{s\in[0,1]} \Vert u(t)-u^n(t)\Vert_{H^1}\to 0$ as $n\to \infty$ in probability. Hence we can take a subsequence that converges almost surely. 
Let $\xi$ be a real valued uniformly continuous function on $H^1(\dbR)$. 
By direct estimates 
\begin{align*}
|P_{s^{n_k}}\xi(v^{n_k})-P_{s}\xi(v)| \leq \dbE\left[|\xi (u^{n_k}_{s^{n_k}})-\xi (u_{s^{n_k}})|\right]+\dbE\left[|\xi (u_{s^{n_k}})-\xi (u_{s})|\right].
\end{align*}
The dominated convergence theorem, the convergence \eqref{conv-sub-seq} and the uniform continuity of $\xi$ imply that the right hand side goes to $0$. 
\end{proof}

\begin{proof}[Proof of Lemma~\ref{lemma-tightness}]
Fix $\epsilon >0$.  The asymptotic compactness of the equation implies that the set of probabilities on $H^1(\dbR)$
  \begin{equation}
   \{P_n(0,\cdot); n\geq 0\}
  \end{equation}
is tight. We choose a compact set $K_\epsilon\subset H^1(\dbR)$ such that 
$$\sup_n P_n(0,K_\epsilon^c)\leq \epsilon/2.$$
Additionally by the lemma \ref{tightness-on-compacts} the set of probabilities on $H^1(\dbR)$
$$\{P_s(v,\cdot); s\in[0,1],\, v\in K_\epsilon\}$$
is also tight. We pick another compact $A_\epsilon\subset H^1(\dbR)$ such that 
$$\sup_{s\in[0,1],\,v\in K_\epsilon}P_s(v,A_\epsilon^c)\leq \epsilon/2.$$

By direct computation
  \begin{align}
   \mu_n (A_\epsilon^c)
    &=\frac{1}{n}\int_0^n \dbP(u(t)\in A_\epsilon^c) \,dt
 \nonumber\\&
   =\frac{1}{n}\sum_{i=0}^{n-1} \int_i^{i+1} \int_{H^1}P_{i}(0; dy) P_{t-i}(y; A_\epsilon^c)  \,dt
    \nonumber\\&
   =\frac{1}{n}\sum_{i=0}^{n-1} \int_i^{i+1}\left\{ \int_{K_\epsilon} P_{i}(0; dy) P_{t-i}(y; A_\epsilon^c)  +  \int_{K_\epsilon^c} P_{i}(0; dy) P_{t-i}(y; A_\epsilon^c)\right\}\,dt
  \nonumber\\&
   \leq\frac{1}{n}\sum_{i=0}^{n-1} \int_i^{i+1}\left\{ \int_{K_\epsilon} P_{i}(0; dy) \epsilon/2 +   P_{i}(0; K_\epsilon^c) \right\}\,dt\leq \epsilon
   \period
   \label{EQ30}
  \end{align}
Thus 
$\mu_n (A_\epsilon^c)\leq \epsilon$,
and the proof is concluded.
\end{proof}

\nnewpage
\startnewsection{Appendix}{sec5}

\subsection{Proof of $\lim_n \dbE[\Vert u^n_{t_n}\Vert^2_{L^2}]\to\dbE[\Vert\xi\Vert^2_{L^2}]$}
Note that the inequality 
$\liminf_n \dbE[\Vert u^n_{t_n}\Vert^2_{L^2}]\geq\dbE[\Vert\xi\Vert^2_{L^2}]$ 
can be
shown easily.
In order to prove the reverse inequality,
assume, contrary to the assertion,
that there exists $\epsilon>0$ and a subsequence of $\{u^n_{t_n}\}$ (still indexed by $n$) such that for all $n\geq 0$
  \begin{equation}
  \dbE[\Vert u^n_{t_n}\Vert^2_{L^2}]-\dbE[\Vert\xi\Vert^2_{L^2}]\geq
     \epsilon
   \period
   \label{EQ37}
  \end{equation}
Fix $T>0$ such that $3 C(R) e^{-2 \lambda T}\leq \epsilon$ where $C(R)$
is the constant in \eqref{bound-H1}. Note that the sequence
$\{u^n_{t_n-T}\}$ satisfies the same assumptions as $\{u^n_{t_n}\}$
and thus there exists a further subsequence (still indexed by $n$) and
$\xi_{-T}$, an $H^1$ valued random variable, such that we also have
\begin{equation}u^n_{t_n-T}\to\xi_{-T}\label{EQ47}\end{equation} in distribution in $L^{2}_{\rm loc}(\dbR)$.

We shall work on the space 
$\cZ=C([0,T]; L^{2}_{\rm loc}(\dbR))$.
Denote by $z$ the canonical process on this space 
and $\cD$ its right continuous filtration. 

\begin{definition}\label{defn-weak-sol}
{\rm A measure $\nu$ on $\cZ$  is a solution of the
equation \eqref{kdv-intro} if for all $\phi$ smooth and compactly
supported functions
  \begin{equation}
      M^\phi_t=(z_t-z_0,\phi) 
         + \int_0^t (\partial_x^3 z_s+z_s\partial_x z_s +\lambda z_s-f,\phi) ds
   \label{EQ38}
  \end{equation}
and 
  \begin{equation}(M^\phi_t)^2-t\sum_i (\Phi e_i,\phi)^2\label{EQ48}\end{equation}
are $\nu$ local-martingales.}

\end{definition}
\cob

Define the sequence of measures 
  \begin{equation}
      \nu^n(dz) 
        =\int_\Omega \delta_{\{\{u^n_{t_n-T+s}(\omega)\}_{s\in [0,T]}\}}(dz)\dbP(d\omega)
   \label{EQ39}
  \end{equation}
 on $\cZ$. We shall prove by the Aldous criterion (\cite[Theorem~16.10]{B}) 
that the sequence 
$\{\nu^n\}_{n=1}^{\infty}$ is tight in distribution in $\cZ$. The first step is the following estimate.

\coe
\begin{Lemma}\label{compactness-of-measures}
We have
$\dbE[\Vert u^n_{T_n+d_n}-u^n_{T_n}\Vert_{L^{2}}^2]\to 0$ for all stopping times $T_n$ and for all $d_n\to 0$.
\end{Lemma}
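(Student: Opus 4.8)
The plan is to estimate the increment $u^n_{T_n+d_n}-u^n_{T_n}$ directly from the mild formulation, splitting it into the four contributions coming from the linear propagator acting on $u^n_{T_n}$, the nonlinear Duhamel term, the forcing term, and the stochastic convolution, and then to take expectations and let $d_n\to 0$. By the strong Markov property, on $[T_n, T_n+d_n]$ the solution satisfies
\[
 u^n_{T_n+d_n}=U_\lambda(d_n) u^n_{T_n}-\int_{T_n}^{T_n+d_n} U_\lambda(T_n+d_n-s)\, u^n_s\partial_x u^n_s\,ds+\int_{T_n}^{T_n+d_n} U_\lambda(T_n+d_n-s) f\,ds+\int_{T_n}^{T_n+d_n} U_\lambda(T_n+d_n-s)\Phi\,dW_s.
\]
First I would treat $(U_\lambda(d_n)-\mathrm{Id})u^n_{T_n}$: since $\{U_\lambda(t)\}$ is a $C_0$-semigroup on $L^2(\dbR)$ that is in fact unitary up to the factor $e^{-\lambda t}$ (so $\Vert U_\lambda(t)w\Vert_{L^2}\le e^{-\lambda t}\Vert w\Vert_{L^2}$), and since $u^n_{T_n}$ enjoys the uniform $H^1$ bound from Lemma~\ref{lemma-invariant-I}, the map $d\mapsto U_\lambda(d) w$ is continuous at $d=0$ in $L^2$ with a modulus controlled by $\Vert w\Vert_{H^1}$; more quantitatively, $\Vert (U_\lambda(d_n)-\mathrm{Id})w\Vert_{L^2}\le C(d_n\Vert w\Vert_{H^3}\wedge \Vert w\Vert_{L^2})$ via Fourier multipliers, but since we only have $H^1$ I would instead invoke strong continuity together with the uniform integrability of $\Vert u^n_{T_n}\Vert_{H^1}^2$ (which follows from \eqref{EQ09}) and dominated convergence to conclude $\dbE[\Vert (U_\lambda(d_n)-\mathrm{Id})u^n_{T_n}\Vert_{L^2}^2]\to 0$.

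For the remaining three terms I would bound their $L^2$ norms on the interval of length $d_n$. The forcing term is immediate: $\Vert\int_{T_n}^{T_n+d_n} U_\lambda(\cdot-s)f\,ds\Vert_{L^2}\le d_n\Vert f\Vert_{L^2}$. The stochastic convolution is handled by the Itô isometry and the boundedness of $U_\lambda$ on $L^2$:
\[
 \dbE\Bigl[\Bigl\Vert\int_{T_n}^{T_n+d_n} U_\lambda(T_n+d_n-s)\Phi\,dW_s\Bigr\Vert_{L^2}^2\Bigr]=\dbE\Bigl[\int_{T_n}^{T_n+d_n}\Vert U_\lambda(T_n+d_n-s)\Phi\Vert_{\mathrm{HS}(L^2,L^2)}^2\,ds\Bigr]\le d_n\Vert\Phi\Vert_{\mathrm{HS}(L^2,L^2)}^2,
\]
using that $T_n$ is a stopping time and $W$ has independent increments after $T_n$ (so that the stochastic convolution over $[T_n,T_n+d_n]$ is well defined with the stated isometry). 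The nonlinear term is the one that requires the a~priori regularity: using $\Vert U_\lambda(t)\Vert_{L^2\to L^2}\le 1$ and $\Vert w\partial_x w\Vert_{L^2}\le \Vert w\Vert_{L^\infty}\Vert\partial_x w\Vert_{L^2}\le \Vert w\Vert_{L^2}^{1/2}\Vert\partial_x w\Vert_{L^2}^{3/2}$ by Agmon's inequality \eqref{EQ01}, I get
\[
 \Bigl\Vert\int_{T_n}^{T_n+d_n} U_\lambda(\cdot-s) u^n_s\partial_x u^n_s\,ds\Bigr\Vert_{L^2}\le \int_{T_n}^{T_n+d_n}\Vert u^n_s\Vert_{L^2}^{1/2}\Vert\partial_x u^n_s\Vert_{L^2}^{3/2}\,ds\le d_n \sup_{s\ge 0}\Vert u^n_s\Vert_{H^1}^2,
\]
and taking expectations and invoking Lemma~\ref{finite-time-H1} (or the uniform-in-time bound after noting $T_n$ is bounded in the relevant range, or simply $\dbE[\sup_{s\in[0,T_n+1]}\Vert u^n_s\Vert_{H^1}^2]<\infty$) gives a bound of the form $C d_n^2\to 0$, after squaring. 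Summing the four pieces via $(a+b+c+d)^2\le 4(a^2+b^2+c^2+d^2)$ yields $\dbE[\Vert u^n_{T_n+d_n}-u^n_{T_n}\Vert_{L^2}^2]\to 0$.

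The main obstacle is the nonlinear term combined with the fact that $T_n$ is a \emph{random} stopping time and that we only control $u^n$ uniformly in $H^1$, not in higher norms: one has to make sure the pathwise $H^1$ bound is integrable uniformly in $n$ over the (possibly growing) time horizon. I would circumvent this by first reducing to deterministic times via the strong Markov property and the fact that $u^n_{T_n}$ again has law dominated by an $H^1$-bounded family (Lemma~\ref{lemma-invariant-I}), so that on the interval $[T_n,T_n+1]$ the bound $\dbE[\sup_{s\in[T_n,T_n+1]}\Vert u^n_s\Vert_{H^1}^2]\le C(R)$ holds with a constant independent of $n$ by Lemma~\ref{finite-time-H1} applied with $R_0$ the uniform bound on $\Vert u^n_{T_n}\Vert_{H^1}$ — using that $u^n_{T_n}\in H^1$ almost surely with uniformly bounded second moment, after conditioning on $\mathbb{G}_{T_n}$. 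Once that uniform integrability is in place, the four estimates above and dominated convergence finish the proof.
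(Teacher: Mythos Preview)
Your approach via the mild formulation is genuinely different from the paper's, which instead applies It\^o's formula to $\Vert u^n_{T_n+d_n}-u^n_{T_n}\Vert_{L^2}^2$, uses the cancellation $(\partial_x^3 u + u\partial_x u, u)=0$ to kill the dispersive part of the diagonal term, and then handles the dangerous cross--term $(u^n_{T_n+d_n}-u^n_{T_n}-\int\Phi\,dW,\,u^n_{T_n})$ by $H^{-1}$/$H^1$ duality, estimating $\Vert u^n_{T_n+d_n}-u^n_{T_n}\Vert_{H^{-1}}$ through a second It\^o computation in which $A^{-1}\partial_x^3$ is bounded $H^1\to L^2$. The paper's route never confronts $(U_\lambda(d_n)-\mathrm{Id})u^n_{T_n}$ head-on; the price is the $H^{-1}$ bootstrap. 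Your route is more direct and avoids the bootstrap; the price is that you must quantify the semigroup continuity against only $H^1$ regularity, and you need stronger moment control for the nonlinear term.

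There is, however, a real gap in your treatment of the linear term. ``Strong continuity together with uniform integrability and dominated convergence'' does not apply here because the underlying random element $u^n_{T_n}$ changes with $n$: there is no almost-sure pointwise convergence to dominate. What you actually need is the quantitative bound you almost wrote down: interpolating the Fourier-multiplier estimates $\Vert(U_0(d)-\mathrm{Id})w\Vert_{L^2}\le Cd\,\Vert w\Vert_{H^3}$ and $\Vert(U_0(d)-\mathrm{Id})w\Vert_{L^2}\le 2\Vert w\Vert_{L^2}$ gives $\Vert(U_\lambda(d)-\mathrm{Id})w\Vert_{L^2}\le Cd^{1/3}\Vert w\Vert_{H^1}$, and then $\dbE[\Vert(U_\lambda(d_n)-\mathrm{Id})u^n_{T_n}\Vert_{L^2}^2]\le Cd_n^{2/3}\,\dbE[\Vert u^n_{T_n}\Vert_{H^1}^2]$. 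This reduces the question to a uniform second $H^1$-moment at the stopping time, which (as you say) one obtains by extending Lemma~\ref{finite-time-H1} to random initial data via the strong Markov property and the uniform-in-time bounds of Lemmas~\ref{main-bounds} and~\ref{lemma-invariant-I}. A second, smaller issue: after squaring, your nonlinear estimate actually requires $\dbE\bigl[\sup_{s\in[T_n,T_n+d_n]}\Vert u^n_s\Vert_{H^1}^{4}\bigr]$ bounded uniformly in $n$, not just the second moment; this is available from the $k=2$ case of \eqref{EQ09} combined with the same sup-in-time argument, but you should invoke it explicitly.
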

\cob
\begin{proof}[Proof of Lemma~\ref{compactness-of-measures}]
Denote 
$A=(1-\partial_x^2)$ and $U^n_s=A^{-1}(u^n_{s}-u^n_{T_n})$. Applying Ito's lemma, we get
  \begin{align}
   \Vert u^n_{T_n+d_n}-u^n_{T_n}\Vert^2_{L^2}&= 
     \int_{T_n}^{T_n+d_n}
      \left(-2(\partial_x^3 u^n_s+ u^n_s\partial u^n_s+\lambda u^n_s-f,u^n_s-u^n_{T_n})+\Vert\Phi\Vert_{{\rm HS}(L^2,L^2)}^2\right)\,ds
   \nonumber\\&\indeq
   + 2 \int_{T_n}^{T_n+d_n}(\Phi dW_s,u^n_s-u^n_{T_n})
   \nonumber\\ &=
    \int_{T_n}^{T_n+d_n}
        \left(-2(\partial_x^3 u^n_s+ u^n_s\partial u^n_s+\lambda u^n_s-f,u^n_s)+\Vert\Phi\Vert_{{\rm HS}(L^2,L^2)}^2\right)\,ds
   \nonumber\\&\indeq
   +2\left(u^n_{T_n+d_n}-u^n_{T_n}-\int_{T_n}^{T_n+d_n}\Phi dW_s ,u^n_{T_n}\right) 
   \nonumber\\&\indeq
   + 2 \int_{T_n}^{T_n+d_n}(\Phi dW_s,u^n_s-u^n_{T_n})
   \label{EQ36}
  \end{align}
Now, we proceed to bound the terms on the far right side of the above equality.
We first use $u^n_s\in H^3(\dbR)$ and $(\partial_x^3 u^n_s+ u^n_s\partial_x u^n_s,u^n_s)=0$ to get
  \begin{align}
   |(\partial_x^3 u^n_s+ u^n_s\partial_x u^n_s+\lambda u^n_s-f,u^n_s)|=|( \lambda u^n_s-f,u^n_s)|\leq C(   \Vert u_s^n\Vert_{L^2}^2 +1)
  \end{align}
which is bounded in expectation. 
The difficult term is
  \begin{align}
   &\left|(u^n_{T_n+d_n}-u^n_{T_n}-\int_{T_n}^{T_n+d_n}\Phi dW_s ,u^n_{T_n}) \right|
   \nonumber\\&\indeq
     \leq \left(\left\Vert u^n_{T_n+d_n}-u^n_{T_n}\right\Vert_{H^{-1}} 
      +\left\Vert\int_{T_n}^{T_n+d_n}\Phi dW_s\right\Vert_{H^{-1}}\right) 
    \Vert u^n_{T_n}\Vert_{H^1}
   \period
   \label{EQ52}
  \end{align}
This shows that in order to obtain an estimate
in $L^2$ one only needs the estimate in $H^{-1}$. 
Note that the bound on 
$\dbE[\Vert\int_{T_n}^{T_n+d_n}\Phi dW_s\Vert_{H^{-1}}]$ can be easily
obtained by the Burkholder-Davis-Gundy inequality. In order to 
control $\Vert u^n_{T_n+d_n}-u^n_{T_n}\Vert_{H^{-1}}$, we apply the Ito's lemma 
and obtain
  \begin{align}
   \Vert u^n_{T_n+d_n}-u^n_{T_n}\Vert^2_{H^{-1}}
          &= \int_{T_n}^{T_n+d_n}
               \Bigl(
               -2(\partial_x^3 u^n_s+ u^n_s\partial u^n_s+\lambda
	       u^n_s-f,U^n_s)+\Vert A^{-1/2}\Phi\Vert_{{\rm HS}(L^2,L^2)}^2
              \Bigr)\,ds 
       \nonumber\\&\indeq
          + 2 \int_{T_n}^{T_n+d_n}(\Phi dW_s,U^n_s)
   \period
   \label{EQ40}
  \end{align}
First note that 
$\dbE\left[|( u^n_s\partial u^n_s+\lambda u^n_s-f,U^n_s)|\right]$ is bounded due to uniform $L^2(\dbR)$ and $H^1(\dbR)$ bounds in \eqref{EQ54} and \eqref{EQ09}. One may also control $ \dbE[\int_{T_n}^{T_n+d_n}|(\Phi dW_s,U^n_s)|]$ by the Burkholder-Davis-Gundy-inequality. The main term is 
  \begin{equation}
   \dbE\left[\int_{T_n}^{T_n+d_n}(\partial_x^3 u^n_s,U^n_s)ds\right]   
   \label{EQ41}
  \end{equation}
which we 
estimate as
  \begin{align}
   \dbE\left[|(\partial_x^3 u^n_s,U^n_s)|\right]&=\dbE\left[|(A^{-1}\partial_x^3 u^n_s,u^n_{s}-u^n_{T_n+d_n})|\right]\leq \dbE\left[\Vert A^{-1} \partial_x^3 u^n_s\Vert_{L^2}^2\right]+\dbE\left[\Vert u^n_{s}-u^n_{T_n}\Vert_{L^2}^2\right]
    \nonumber\\&
     \leq \dbE\left[\Vert u^n_s\Vert_{H^1}^2\right]+\dbE\left[\Vert u^n_{s}-u^n_{T_n}\Vert_{L^2}^2\right]\leq 4C(R)
   \period
   \label{EQ42}
  \end{align}
Finally, combining all the estimates we obtain that $\dbE[\Vert u^n_{T_n+d_n}-u^n_{T_n}\Vert_{L^2}^2]\leq C d_n\to 0$ as $n\to\infty$.
\end{proof}
\coe
\begin{Lemma}\label{limiting-measure}
The family of measures $\nu^n$ is tight over $\cZ$ and any limiting measure $\nu$ of this sequence is a solution of \eqref{kdv-intro}. Additionally, the distribution of $z_0$ (resp.~$z_T$) under $\nu$ is the same as the distribution of $\xi_{-T}$ (resp.~$\xi$).
\end{Lemma}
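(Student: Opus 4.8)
The plan is to verify the hypotheses of the Aldous criterion in $\cZ=C([0,T];L^2_{\rm loc}(\dbR))$ and then to identify any limit point of $\{\nu^n\}$ with a solution of \eqref{kdv-intro} by passing to the limit in the associated martingale problem. I begin with tightness, for which two ingredients are required. First, for fixed $s\in[0,T]$ the law of $z_s$ under $\nu^n$ is the law of $u^n_{t_n-T+s}$; since $t_n-T+s\to\infty$, Lemma~\ref{lemma-invariant-I} gives $\sup_n\dbE[\Vert u^n_{t_n-T+s}\Vert^2_{H^1}]\le C(R)$, and bounded balls of $H^1(\dbR)$ are relatively compact in $L^2_{\rm loc}(\dbR)$, so Markov's inequality yields tightness of the laws of $z_s$ in $L^2_{\rm loc}(\dbR)$. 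Second, for the standard Fréchet metric $d$ on $L^2_{\rm loc}(\dbR)$ one has $d(u,v)\le\min(1,\Vert u-v\Vert_{L^2})$, so Lemma~\ref{compactness-of-measures} together with Markov's inequality shows that for all stopping times $T_n\le T$ and all $d_n\to0$ one has $d(z_{T_n+d_n},z_{T_n})\to0$ in probability under $\nu^n$, which is exactly the Aldous condition. Since each $u^n$ has continuous paths in $H^1(\dbR)$, hence in $L^2_{\rm loc}(\dbR)$, the limit points are supported on $C([0,T];L^2_{\rm loc}(\dbR))$, and the Aldous criterion (\cite[Theorem~16.10]{B}) gives that $\{\nu^n\}$ is tight in $\cZ$.

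Next I pass to a subsequence along which $\nu^n\to\nu$ in $\cZ$ and, using \eqref{limit-H-1} and \eqref{EQ47}, refine it further so that $u^n_{t_n}\to\xi$ and $u^n_{t_n-T}\to\xi_{-T}$ in distribution in $L^2_{\rm loc}(\dbR)$. The evaluation maps $z\mapsto z_0$ and $z\mapsto z_T$ are continuous from $\cZ$ to $L^2_{\rm loc}(\dbR)$; since by construction the laws of $z_0$ and $z_T$ under $\nu^n$ are those of $u^n_{t_n-T}$ and $u^n_{t_n}$, weak convergence of $\nu^n$ forces the law of $z_0$ under $\nu$ to coincide with that of $\xi_{-T}$ and the law of $z_T$ under $\nu$ to coincide with that of $\xi$.

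It remains to show that $\nu$ solves \eqref{kdv-intro} in the sense of Definition~\ref{defn-weak-sol}. Fix $\phi$ smooth and compactly supported. Testing \eqref{kdv-intro} against $\phi$ and using the cancellation $(\partial_x^3 v+v\partial_x v,v)=0$ shows that, under $\nu^n$, the functional $M^\phi$ of \eqref{EQ38} evaluated along the shifted path equals $\sum_i(\Phi e_i,\phi)(\beta^i_{t_n-T+t}-\beta^i_{t_n-T})$, a Gaussian martingale with deterministic quadratic variation $t\sum_i(\Phi e_i,\phi)^2$; hence under each $\nu^n$ both $M^\phi$ and $(M^\phi_t)^2-t\sum_i(\Phi e_i,\phi)^2$ are martingales, and all moments of $\sup_{[0,T]}|M^\phi|$ are bounded uniformly in $n$. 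Moreover the map $z\mapsto M^\phi_t(z)$ is continuous on $\cZ$: the linear terms are plainly continuous, while for the nonlinear term $\int_0^t(z_s\partial_x z_s,\phi)\,ds=-\frac12\int_0^t(z_s^2,\partial_x\phi)\,ds$ one uses that $\partial_x\phi$ is compactly supported, that convergence in $\cZ$ is uniform-in-$s$ $L^2$ convergence on that support, and that on a fixed compact set $L^2$ convergence of a bounded family implies $L^1$ convergence of the squares. Then, for $0\le s_1<\dots<s_k\le s<t\le T$ and bounded continuous $g_1,\dots,g_k$ on $L^2_{\rm loc}(\dbR)$, the $\nu^n$-martingale property gives $\dbE_{\nu^n}[(M^\phi_t-M^\phi_s)\prod_{j} g_j(z_{s_j})]=0$ and likewise with $M^\phi$ replaced by $(M^\phi_t)^2-t\sum_i(\Phi e_i,\phi)^2$; the uniform moment bounds make these integrands uniformly integrable, so letting $n\to\infty$ yields the same identities under $\nu$. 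Since $\phi$, the times $s_j$ and the functions $g_j$ are arbitrary, $M^\phi$ and $(M^\phi_t)^2-t\sum_i(\Phi e_i,\phi)^2$ are $\nu$-martingales for the natural filtration and, by right continuity of paths, for $\cD$, so $\nu$ is a solution of \eqref{kdv-intro}.

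The main obstacle is this last passage to the limit: $M^\phi$ is an unbounded and nonlinear functional of the path, so weak convergence alone is insufficient, and the argument relies on the two facts that under each $\nu^n$ the process $M^\phi$ is a time-changed Brownian motion — giving moment bounds uniform in $n$ — and that $z\mapsto\int_0^t(z_s^2,\partial_x\phi)\,ds$ is genuinely continuous on $\cZ$ because $\phi$ has compact support. The tightness itself reduces, via the Aldous criterion, to the already-established Lemma~\ref{compactness-of-measures}.
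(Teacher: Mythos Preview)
Your proof is correct and follows essentially the same route as the paper: tightness via the Aldous criterion (reduced to Lemma~\ref{compactness-of-measures} together with the marginal tightness coming from the uniform $H^1$ bounds), and identification of the limit by passing to the limit in the martingale problem for $M^\phi$ and $(M^\phi)^2-t\sum_i(\Phi e_i,\phi)^2$. The only noteworthy variation is in the uniform-integrability step: you exploit that under each $\nu^n$ the process $M^\phi$ is explicitly Gaussian with deterministic quadratic variation, which gives all moments uniformly in $n$, whereas the paper truncates and controls the remainder via the uniform $H^1$ bound~\eqref{bound-H1}; your argument is slightly cleaner here (and the mention of the cancellation $(\partial_x^3 v+v\partial_x v,v)=0$ is irrelevant and can be deleted).
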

\cob

\begin{proof}[Proof of Lemma~\ref{limiting-measure}]
The tightness follows directly from the Lemma~\ref{compactness-of-measures} 
and the Aldous criterion, \cite[Theorem~16.10]{B}.

We first show the solution property of the limiting measure. 
Let $\phi$ be a smooth compactly supported function,
let $0\leq s_1\leq \cdots\leq s_k\leq s<t$, and
assume that $g\colon\dbR^k\to\dbR$ continuous and bounded. 
Since $u^n$ is a solution under $\nu^n$, we have
  \begin{align}
   \dbE^{\nu^n}\left[ g(M^\phi_{s_1},\ldots,M^\phi_{s_k})M^\phi_{t}\right]&=\dbE^{\nu^n} \left[ g(M^\phi_{s_1},\ldots,M^\phi_{s_k})M^\phi_{s}\right]
  \end{align}
and
  \begin{align}
\dbE^{\nu^n}\left[ g(M^\phi_{s_1},\ldots,M^\phi_{s_k})((M^\phi_{t})^2-t\sum_i (\Phi e_i,\phi)^2)\right]&=\dbE^{\nu^n} \left[ g(M^\phi_{s_1},\ldots,M^\phi_{s_k})((M^\phi_{s})^2-s\sum_i (\Phi e_i,\phi)^2)\right]
   \period
  \end{align}
The mappings that we are integrating are continuous in $z$ under the topology of $\cZ$, but they are not bounded. 
However, the bound \eqref{bound-H1} allows us to truncate them,
obtain a uniform estimate on the remainder, and pass to the limit in $n$.
We obtain
  \begin{align}
   \dbE^{\nu}\left[ g(M^\phi_{s_1},\ldots,M^\phi_{s_k})M^\phi_{t}\right]
        &=\dbE^{\nu} \left[ g(M^\phi_{s_1},\ldots,M^\phi_{s_k})M^\phi_{s}\right]
  \end{align}
and
  \begin{align}
    \dbE^{\nu}\left[ g(M^\phi_{s_1},\ldots,M^\phi_{s_k})((M^\phi_{t})^2-t\sum_i (\Phi e_i,\phi)^2)\right]&=\dbE^{\nu} \left[ g(M^\phi_{s_1},\ldots,M^\phi_{s_k})((M^\phi_{s})^2-s\sum_i (\Phi e_i,\phi)^2)\right]
   \period
  \end{align} 
Thus $\nu$ is a solution of \eqref{kdv-intro}.
We already had the convergence in distribution in $L^{2}_{\rm loc}(\dbR)$. Now,
  \begin{align}
   u^n_{t_n}\to \xi
  \end{align}
and
  \begin{align}
    u^n_{t_n-T}\to \xi_{-T}
   \period
  \end{align}
The distribution of $z_0$ (resp.~$z_T$) under $\nu$ 
is the distribution of $\xi_{-T}$ (resp.~$\xi$), which
completes the proof of Lemma~\ref{limiting-measure}.
\end{proof}

We now finish the proof of 
\eqref{EQ25}. 
Under $\nu$, given the quadratic variation of 
$z_t-z_0+\int_0^t (\partial_x^3 z_s+z_s\partial_x z_s +\lambda z_s-f) ds$, there exists a sequence of $\nu$ Brownian motions $\tilde B^i$ such that $z_t-z_0+\int_0^t \partial_x^3 z_s+z_s\partial_x z_s +\lambda z_s-f ds= \sum_i \Phi e_i \tilde B^i_t$.

Under $\nu$, 
the process $z$ is $H^1(\dbR)$-valued. Let $z^k_0$ 
be a sequence in $H^3(\dbR)$ converging to $z_0$ in $H^1(\dbR)$ and let
$z^k$ be the associated solutions of \eqref{kdv-intro} in the probability space $(\cZ, \cD,\nu)$. By \cite[Lemma~3.2]{DD1}, $z^k_s\in H^3(\dbR)$ for all $s\in [0,T]$.  
Applying Ito's Lemma to $z^k$, we get
that the difference
  \begin{equation}
    \Vert z_t^k\Vert^2_{L^2}-\Vert z_0^k\Vert^2_{L^2}+2\lambda \int_0^t\Vert z_s^k\Vert^2_{L^2} ds-\int_0^t(z_s^k,f)ds-\Vert\Phi\Vert_{{\rm HS}}^2t
   \label{EQ43}
  \end{equation}
defines a martingale.
Taking the expectation under $\nu$  we find that 
    \begin{equation}\dbE^\nu[\Vert z_T^k\Vert^2_{L^2}]-e^{-2\lambda T}\dbE^\nu[\Vert z_0^k \Vert^2_{L^2}]=\int_0^T e^{-2\lambda (T-s)} \left(\dbE^\nu[(z_s^k,f)]+\Vert\Phi\Vert_{{\rm HS}}^2\right)ds\period\end{equation}
By the Feller property and the convergence of $z^k_0$, taking the limit as $k$ goes to $\infty$, we obtain that 
\begin{equation}\dbE^\nu[\Vert z_T\Vert^2_{L^2}]-e^{-2\lambda T}\dbE^\nu[\Vert z_0\Vert^2_{L^2}]=\int_0^T e^{-2\lambda (T-s)} \left(\dbE^\nu[(z_s,f)]+\Vert\Phi\Vert_{{\rm HS}}^2\right)ds
   \period\label{EQ49}\end{equation}
By an assumption the mapping $v\to (v,f)$ is continuous in $L^2_{\rm loc}(\dbR)$. Thus 
  \begin{align}
   \dbE^\nu[\Vert \xi\Vert^2_{L^2}]-e^{-2\lambda T}\dbE^\nu[\Vert \xi_{-T}\Vert^2_{L^2}]&=\dbE^\nu[\Vert z_T\Vert^2_{L^2}]-e^{-2\lambda T}\dbE^\nu[\Vert z_0\Vert^2_{L^2}]\nonumber\\
&=\int_0^T e^{-2\lambda (T-s)} \left(\dbE^\nu[(z_s,f)]+\Vert\Phi\Vert_{{\rm HS}}^2\right)ds\nonumber\\
&=\lim_n \int_0^T e^{-2\lambda (T-s)} \left(\dbE[(u^n_{t_n-T+s},f)]+\Vert\Phi\Vert_{{\rm HS}}^2\right)ds\nonumber\\
&=\lim_n \dbE[\Vert u^n_{t_n}\Vert^2_{L^2}]-e^{-2\lambda T}\dbE^\nu[\Vert u^n_{t_n-T}\Vert^2_{L^2}]
   \period
\end{align}
Note that $e^{-2\lambda T}  \dbE[\Vert\xi_{-T}\Vert^2_{L^2}]+e^{-2\lambda T}  \dbE[\Vert u^n_{t_n-T}\Vert^2_{L^2}]\leq 2e^{-2\lambda T}  C(R)$. Thus using the previous inequality and the choice of $T$
\begin{align}
2\epsilon/3\geq  2C(R) e^{-2\lambda T}  \geq \limsup_n  \dbE[\Vert u^n_{t_n}\Vert^2_{L^2}]-\dbE[\Vert\xi\Vert^2_{L^2}]\geq \epsilon
\end{align}
which is a contradiction, thus concluding the proof
of \eqref{EQ25}.

\subsection{Proof of $\dbE[I(\xi)]=\lim_n \dbE[I(u^n_{t_n})]$}
Recall that  
\begin{equation}u^n_{t_n}\to \xi\label{EQ50}\end{equation} in distribution in $L^2$.
We assume again that the convergence $\dbE[I(\xi)]=\lim_n \dbE[I(u^n_{t_n})]$ does not hold. As in the previous section, there exists a further subsequence(denoted similarly) and $\epsilon>0$ such that
\begin{equation}|\dbE[I(\xi)]- \dbE[I(u^n_{t_n})]|\geq \epsilon
   \period\label{EQ51}\end{equation}
Given the uniform estimates,
 one can prove that there exists a constant dependent on $R$ such that $\sup_t |\dbE[I(u_t)]|+|\dbE[\Vert u_t\Vert^4_{L^2}]|\leq C$. 
We fix $T$ such that $3 C e^{-2\lambda T}\leq \epsilon$.

Using the same notation and results in the first part of the appendix, we have that 
$\nu^n\to \nu$ in distribution in $\cZ$. 

Thus for all $s\in[0,T]$, we have $u^n_{t_n-T+s}\to z_s$ in distribution in 
$L^{2}_{\rm loc}(\dbR)$.  
Note that to pass from convergence \eqref{limit-H-1} to the convergence \eqref{limit-l2} 
we haven't needed to 
pass to a subsequence.
Thus one may show similarly that for all $s\in[0,T]$ 
the convergence of $u^n_{t_n-T+s}$ to $z_s$ is in fact in distribution in
$L^2(\dbR)$. Additionally, the mapping $v\to( \int v^3(x) dx,(\partial_x v,\partial_x f)-(v^2,f), \int
v(x)\sum_i |\Phi e_i|^2 dx)$ is continuous in $L^2(\dbR)$ on bounded
sets of $H^1(\dbR)$.

Therefore, as $n$ goes to infinity,
  \begin{align}
   &\int_0^Te^{-2\lambda (T-s)}\biggl(  
    \dbE\left[\frac{\lambda}{3}\int_\dbR u^n({t_n-T+s},x)^3 dx+ 2(\partial_x u^n(t_n-T+s),\partial_x f)-((u^n(t_n-T+s))^2,f)\right] 
   \nonumber\\&\indeq\indeq\indeq\indeq\indeq
    +\Vert\partial_x\Phi\Vert^2_{{\rm HS}(L^2,L^2)} 
    \nonumber
      -\sum_i \int\dbE[u^n(t_n-T+s,x)]\,|\Phi e_i|^2 \,dx\biggr)\,ds
  \end{align}
converges to
  \begin{align}
   &\int_0^Te^{-2\lambda (T-s)}\biggl( 
    \dbE^\nu\left[\frac{\lambda}{3}  \int_\dbR z({s},x)^3 dx+2(\partial_x z(s),\partial_x f)-(z^2(s),f)\right]    \nonumber\\&\indeq\indeq\indeq\indeq\indeq
 +\Vert\partial_x\Phi\Vert^2_{{\rm HS}(L^2,L^2)} 
    \nonumber
      -\sum_i \int\dbE^\nu[z(s,x)]\,|\Phi e_i|^2 \,dx\biggr)\,ds
   \period
  \end{align}
We have that
$\nu$ is a solution of \eqref{kdv-intro} and similarly to the previous section we can approximate the process $\{z_s\}$ by $\{z_s^k\}$, appeal to the Feller property in $H^1(\dbR)$, and show that the equality 
  \begin{align}
  \dbE^\nu [I(z_T))]&-e^{-2\lambda T}\dbE^\nu[I(z_0)]
  \nonumber\\&\indeq
   =\int_0^Te^{-2\lambda (T-s)}\biggl( 
    \dbE^\nu\left[ \frac{\lambda}{3} \int_\dbR z(z,x)^3 dx+2(\partial_x z(s),\partial_x f)-(z^2(s),f)\right] 
    \nonumber\\&\indeq\indeq\indeq\indeq\indeq\indeq
      \Vert\partial_x\Phi\Vert^2_{{\rm HS}(L^2,L^2)} -\sum_i \int\dbE^\nu [z(s,x)]\,|\Phi e_i|^2 \,dx\biggr)\,ds
  \end{align}
holds.
This finally gives 
  \begin{align}
   &\dbE [I(\xi))]-e^{-2\lambda T}\dbE^\nu[I(\xi_{-T})]
   \nonumber\\&\indeq
   =  \dbE^\nu [I(z_T))]-e^{-2\lambda T}\dbE^\nu[I(z_0)]
   =\lim_n  \dbE [I(u^n_{t_n}))]-e^{-2\lambda T}\dbE^\nu[I(u^n_{t_n-T})]
   \period
   \label{EQ10}
  \end{align}
We finish similarly to the previous section. Namely,
  \begin{align}
   \epsilon &\leq \liminf_n |\dbE[I(u^n_{t_n})]-\dbE[I(\xi)]|\leq e^{-2\lambda T }\liminf_n  |\dbE[I(u^n_{t_n-T})]-\dbE[I(\xi_{-T})]|
   \nonumber\\&
  \leq e^{-2\lambda T} 2C \leq 2\epsilon/3
   \label{EQ53}
  \end{align}
which gives the contradiction.

\section*{Acknowledgments} 
We thank Remigijus Mikulevicius for useful suggestions.
I.K.~was supported in part by the NSF grant DMS-1311943,
while M.Z.~was supported in part by the NSF grant
DMS-1109562.

\end{document}